\documentclass[a4paper,11pt]{amsart}
\usepackage{amsmath, amsthm, amssymb, graphicx, amsrefs}
\usepackage{color}

\theoremstyle{plain}
\newtheorem{theorem}{Theorem}[section]
\newtheorem{lemma}[theorem]{Lemma}
\newtheorem{proposition}[theorem]{Proposition}
\newtheorem{corollary}[theorem]{Corollary}

\theoremstyle{definition}
\newtheorem{definition}[theorem]{Definition}
\newtheorem{example}[theorem]{Example}

\theoremstyle{remark}
\newtheorem{remark}[theorem]{Remark}

\numberwithin{equation}{section}

\newcommand{\R}{{\mathbb R}}
\newcommand{\N}{{\mathbb N}}
\newcommand{\Z}{{\mathbb Z}}
\newcommand{\Lp}{\mathcal{L}}
\newcommand{\lm}{\lambda}
\newcommand{\gm}{\gamma}
\newcommand{\Gm}{\Gamma}
\newcommand{\al}{\alpha}
\newcommand{\pt}{\partial_t}
\newcommand{\Deg}{{\mathrm{Deg}}}
\newcommand{\inte}{{\mathrm{int }}}
\newcommand{\cp}{{\mathrm{cap}}}

\newcommand{\as}[1]{\left\langle #1\right\rangle}

\newcommand{\ov}[1]{\overline{ #1}}
\newcommand{\ow}[1]{\widetilde{ #1}}
\newcommand{\oh}[1]{\widehat{ #1}}

\newcommand{\wt}{\widetilde}

\newcommand{\Hm}[1]{\leavevmode{\marginpar{\tiny%
$\hbox to 0mm{\hspace*{-0.5mm}$\leftarrow$\hss}%
\vcenter{\vrule depth 0.1mm height 0.1mm width \the\marginparwidth}%
\hbox to 0mm{\hss$\rightarrow$\hspace*{-0.5mm}}$\\\relax\raggedright
#1}}}

\providecommand{\eat}[1]{}

\begin{document}

\title[Coverings of graphs]{Coverings and the heat equation on graphs: stochastic incompleteness, the Feller property and uniform transience}

\author{Bobo Hua}
\address{B. Hua, School of Mathematical Sciences, Fudan University, 200433, Shanghai, China; Shanghai Center for Mathematical Sciences, Fudan University, Shanghai
200433, China}
\email{bobohua@fudan.edu.cn}

\author{Florentin M\"unch}
\address{F. M\"unch, Department of Mathematics, University of Potsdam, Potsdam, Germany.}
\email{chmuench@uni-potsdam.de}

\author{Rados{\l}aw K. Wojciechowski}
\address{R.K. Wojciechowski, Graduate Center of the City University of New York, 365 Fifth Avenue, New York, NY, 10016 and}
\address{York College of the City University of New York, 94-20 Guy R. Brewer Blvd., Jamaica, NY 11451.}
\email{rwojciechowski@gc.cuny.edu}
\date{\today}

\thanks{B.H. is supported by NSFC, grant no.\ 11401106. R.K.W. is supported by the Simons Foundation and PSC-CUNY Research Awards and thanks Fudan University and Hokkaido University for their generous hospitality while parts of this work were carried out.}

\begin{abstract} We study regular coverings of graphs and manifolds with a focus on properties of the heat equation. In particular, we look at stochastic incompleteness,
the Feller property and uniform transience; and investigate the connection between the validity of these properties on
the base space and its covering.  For both graphs and manifolds, we prove the equivalence of stochastic incompleteness 
of the base and that of its cover. Along the way we also give some new conditions for the Feller property to hold on graphs.
\end{abstract}

\maketitle
\tableofcontents

\section{Introduction}
Connections between coverings and properties of solutions to the heat equation have been studied previously by various authors for both
Riemannian manifolds \cite{Bro81, LS84, Bro85, Li86, Gri99, PS12} and graphs \cite{CY99, DM06}, among other works.
Here, we contribute to this investigation by looking at three recently developed properties of interest for the heat equation on infinite weighted 
graphs, namely, stochastic incompleteness, the Feller property and uniform transience; and study how these properties behave with respect to coverings.  

All three of these properties involve heat escaping to infinity in some sense.
Stochastic incompleteness (or non-conservativeness) concerns a loss in the total amount of heat at some time.  This has been studied rather thoroughly for
manifolds, see \cite{Gri99} for an overview, and, more recently, for graphs \cite{DM06, Woj08, Woj09, Web10, Hua11, Woj11, Hua12, KL12, GHM12, 
KLW13, Hua14, Fol14b, HL17, MW}.    The Feller property concerns heat vanishing at infinity.  It has been investigated for manifolds in \cite{Aze74, Yau78, Dod83, Hsu89, Dav92, PS12}, among
other works and, again more recently, for graphs in \cite{Woj17}.  Uniform transience is a strengthening of transience (as well as of the Feller property)  
which was recently introduced for graphs in \cite{KLSW17}, following previous work in \cite{BCG01, Win10, Kas10, Kas13}.  This is also related to the
notion of uniform subcriticality which has been studied for elliptic operators defined on domains in Euclidean space in \cite{Pin88} and, more recently, for Schr{\"o}dinger
operators on weighted graphs in \cite{KPP}.
Intuitively, if transience means that heat (or a random walker) escapes to infinity eventually, uniform transience means that it does so in all directions.  The only connection in general between these properties
is that uniform transience always implies the Feller property, see \cite{KLSW17}.  

\smallskip
In this note, we investigate how these three properties propagate between a base space and a regular covering of the space.  As we will see, in general, a base graph
is stochastically incomplete if and only if the cover is stochastically incomplete while for the Feller property and for uniform transience we show that 
if the base satisfies these properties, then so does the covering but not the other way around, see Theorem~\ref{t:covering}.  In the case of finitely many sheets, all three statements
become equivalences.

For both stochastic incompleteness and the Feller property, 
we are guided in this by previous work on Riemannian manifolds.  For stochastic incompleteness, this result is known for manifolds but the proof 
found in the literature uses stochastic partial differential equations see, for example, \cite{Elw82}. 
It was asked in \cite{PS12} to find a deterministic proof of this result. In this note, we provide such a proof in Theorem~\ref{thm:scequ}. We  use a
result of Li, see Theorem~\ref{thm:pli}, which reveals a simple connection between the heat kernel on the base and the heat kernel on the cover and can be derived from the arguments found in \cite{Bor00, Li12}. We first develop this approach for graphs, see Theorem~\ref{t:equality} and Theorem~\ref{t:covering}~(i).

Likewise, for the Feller property on manifolds, the recent paper of \cite{PS12} offers such a result with a slightly more difficult proof while it is rather a direct consequence of the connection between the heat kernels mentioned above, see Theorem~\ref{t:covering}~(ii).  
For uniform transience, we are not guided by any work on manifolds but rather 
exploit a connection between the various equivalent statements for uniform transience found in \cite{KLSW17} and the Green's function.

Before establishing these connections, we further explore the Feller property for graphs.  
We first show that the Feller property enjoys a certain uniformity with respect to time in Lemma~\ref{l:uniformity}.  
We then give some new conditions for the Feller property to hold.
Specifically, we first utilize the elliptic characterization of the Feller property to give a condition for the Feller property in terms of an inner degree
growth in Theorem~\ref{t:Feller1}.  
This greatly improves a result found in \cite{Woj17} where a parabolic viewpoint is used to show that a uniform bound on the vertex degree
implies the Feller property in analogy to \cite{Yau78, Dod83}.  Given that many criteria for the Feller property on Riemannian manifolds involve lower bounds
on the Ricci curvature, it is surprising that such lower bounds do not imply the Feller property in the discrete setting as we show in Proposition~\ref{p:curvature_feller}.
We then use the heat kernel estimates proven in \cite{BHY17} to give another
growth condition for the Feller property which connects the decay of the vertex measure and the growth of an intrinsic metric on the graph in Theorem~\ref{t:Feller2}.
The concept of an intrinsic metric was first introduced in full generality in \cite{FLW14} and has  found numerous applications in proving results analogous to those on Riemannian manifolds in the graph setting, see \cite{Fol11, BHK13, HKMW13, HKW13, Fol14a, Fol14b, Hua14, HK14, BKW15} and \cite{Kel15} for a 
survey of results in this direction.

\bigskip 

The structure of the paper is as follows. In Section~\ref{s:setting} we introduce our main setting of infinite weighted graphs and define the concepts of stochastic 
incompleteness, the Feller property and uniform transience in this context.  We also discuss the notion of a regular covering for weighted graphs.  In Section~\ref{s:Feller}
we have a closer look at the Feller property and prove the uniformity in time as well as the improved criteria for the Feller property to hold mentioned above.
In Section~\ref{s:coverings} we study the connections between a base and its covering with respect to these properties and also give some 
spectral consequences in this setting.
In Section~\ref{sec:manifold} we give a proof of the equivalence of stochastic incompleteness of a manifold and that of its cover.

\section{Setting and basic definitions} \label{s:setting}
\subsection{Laplacians and the heat equation}
We consider weighted graphs and graph Laplacians as in \cite{KL12} with no killing term and with the additional assumption that our graphs are locally finite.
That is, a \emph{graph} $G=(X,b,m)$ is a triple where $X$ is a countable set of \emph{vertices}, $b:X \times X \to [0,\infty)$ is an \emph{edge weight}
which satisfies $b(x,x)=0$, $b(x,y)=b(y,x)$ and $| \{y \ | \ b(x,y)>0 \} | < \infty$ and $m: X \to (0,\infty)$ is \emph{vertex measure} which can be
extended to all subsets of $X$ by countable additivity.  

For $x \in X$, we let the \emph{weighted degree} of $x$ be given by
\[ \Deg(x) = \frac{1}{m(x)}\sum_{y \in X} b(x,y). \]  
If $b(x,y)>0$, we say that $x$ and $y$ are \emph{connected} by an edge with weight $b(x,y)$
and write $x \sim y$.  For $x \in X$, we call the set $\{ y \ | \ y \sim x\}$ the \emph{neighborhood} of $x$.
We assume that all graphs are \emph{connected} in the usual sense of paths, that is, for all $x, y \in X$, there exists a sequence of vertices
$(x_i)_{i=0}^n$ such that $x=x_0$, $y=x_n$ and $x_i \sim x_{i+1}$ for all $i=0,1, \ldots n-1$.  We denote the usual combinatorial graph metric
by $d$, that is, $d(x,y):= \inf\{n\ | \ x=x_0\sim \ldots \sim x_n=y\}$.  Likewise, we will say that a subset of $X$ is \emph{connected} if
it is connected in the sense of paths which remain in the subset.
If $b(x,y) \in \{0,1\}$ and $m=1$, then we say that the graph
has \emph{standard edge weight and measure}.

We let $C(X) = \{ f: X \to \R \}$ denote the space of all real-valued functions on $X$ and let $\Lp:C(X) \to C(X)$ denote the \emph{formal Laplacian} 
which is given by 
\[ \Lp f(x) = \frac{1}{m(x)} \sum_{y \in X} b(x,y) (f(x) - f(y)). \]
If $C_c(X)$ denotes the finitely supported functions in $C(X)$ and 
\[ \ell^2(X,m) = \{ f \in C(X) \ | \ \sum_{x \in X} f^2(x) m(x)  < \infty \}\]
with inner product $\as{f,g} = \sum_{x \in X} f(x)g(x)m(x)$ and associated norm $\| f \| = \as{f,f}^{1/2}$
denotes the Hilbert space of square summable functions with respect to $m$, then we let $L$ denote the smallest self-adjoint extension of
$\Lp$ restricted to $C_c(X)$, see \cite{KL12, HKLW12, HKMW13} for more details.  

For $t>0$, we let $e^{-tL}$ denote the heat semigroup of $L$ and let $p_t(x,y)$ denote the \emph{heat kernel}
of the graph which is defined by
\[ e^{-tL}f(x) = \sum_{y \in X} p_t(x,y) f(y) m(y) \]
for all functions $f \in \ell^2(X,m)$.  We note that $u(x,t) = e^{-tL}f(x)$ is the minimal solution to the heat equation $(L + \pt)u =0$
with initial condition $u(x,0)=f(x)$ whenever $f\geq0$.  In particular, $p_t(x,y)$ is the smallest non-negative function which satisfies
$(L+ \pt)p_t(x,y)=0$, where the Laplacian is applied in either variable, and $p_0(x,y) = \oh{1}_x(y)$ where $\oh{1}_x = 1_x/m(x)$ is 
the delta function at $x$ divided by the measure at $x.$  Furthermore, as we assume that the graph is connected, $p_t(x,y)>0$
for all $t>0$, $x,y \in X$, see \cite{KL12}

By monotone approximation, the heat semigroup can be extended to all $\ell^p(X,m)$ for $p \in [1,\infty]$,
see \cite{KL12} for details.  In particular, the heat semigroup can be applied to the constant function 1, which is 1 on all vertices. This fact will be needed for the definition of stochastic incompleteness given below.

For vertices $x, y \in X$ we let $g(x,y)$ denote the \emph{Green's function} which is defined by
\[ g(x,y) = \int_0^\infty p_t(x,y) dt. \] 
Note that this function is either always infinite or always finite.  In the first case, a graph is called \emph{recurrent}, in the second, \emph{transient}.
An alternative definition for the Green's function is given via resolvents as follows:
\[ g(x,y) = \lim_{\alpha \to 0^+} (L+ \alpha)^{-1} \oh{1}_x(y). \]

For a sequence of vertices $(x_n)$, we write $x_n \to \infty$ as $n \to \infty$ if $(x_n)$ leaves every finite set eventually.  Furthermore, we let
\[ C_0(X) = \ov{C_c(X)}^{\| \cdot \|_\infty} \]
denote the set of functions \emph{vanishing at infinity} where $\| f \|_\infty = \sup_{x \in X} |f(x)|$.
Hence, $f \in C_0(X)$ if and only if $f(x_n) \to 0$ for every $x_n \to \infty$.

With these preparations we can define the three properties of the heat equation which we consider in this paper.
\begin{definition}
A graph $G=(X,b,m)$ is said to satisfy
\begin{itemize}
\item[(SI)]  \emph{Stochastic incompleteness} if for some (all) $x \in X$, some (all) $t >0$
\[ \sum_{y \in X} p_t(x,y)m(y) < 1.\]
\item[(FP)] The \emph{Feller property} if for some (all) $x \in X$, some (all) $t>0$
\[ p_t(x,y_n) \longrightarrow 0 \textup{ as } y_n \to \infty. \]
\item[(UT)] \emph{Uniform transience} if there exists a constant $C>0$ such that for all $x \in X$ 
\[ g(x,x) \leq C. \]
\end{itemize}
\end{definition}

\begin{remark}\label{r:introduction}
\begin{itemize} 
\item[(i)]  Note that all three properties have to do with heat escaping at infinity.  However, all of the properties have quite a different flavor.
In particular, both (SI) and (FP) depend strongly on the measure while (UT) does not.  In fact, if the inequality in the definition of (UT) holds
for one measure $m$, then it holds for all measures (with the same constant).  Furthermore, while (SI) and (UT) require a large growth on the graph,
(FP) can happen in the case of both large and small growth.  The only general implication that holds between these properties is that (UT) $\Longrightarrow$ (FP)
as noted in \cite{KLSW17} where (UT) is systematically introduced and studied.  However, note that the Green's function does not appear in \cite{KLSW17} 
but (UT) is rather introduced via several other equivalent conditions.  In particular, (UT) is equivalent to $\inf_x \cp(x) >0$ where $\cp(x)$ denotes the 
\emph{capacity} of $x$ which is defined via
\[ \cp(x) = \inf_{\varphi \in C_c(X), \varphi(x)=1} Q(\varphi) \]
where $Q(\varphi) = \frac{1}{2} \sum_{x,y \in X} b(x,y) (\varphi(x) - \varphi(y))^2$ denotes the \emph{energy} of $\varphi$.  Furthermore, (UT) is 
also equivalent to the existence of a constant $C>0$ such that $C \| \varphi \|_\infty \leq Q(\varphi)$ for all $\varphi \in C_c(X)$.  As pointed out by 
M. Schmidt, 
either of these
conditions is easily seen to be equivalent to our definition of (UT) by using general principles such as the resolvent formulation of the Green's function and 
the Green's formula.
\item[(ii)]  An equivalent formulation for (FP) is that $e^{-tL}:C_0(X) \to C_0(X)$, as such, this is also called the $C_0$-\emph{conservativeness} property.
\item[(iii)]  The fact that (UT) $\Longrightarrow$ (FP) mentioned above follows from another characterization of (UT) given in \cite{KLSW17}.  
Namely, (UT) is equivalent to the fact that the domain of the form associated to $L$ is contained in $C_0(X)$ for all measures $m$.  That is, 
\[ D(Q) = \ov{C_c(X)}^{\| \cdot \|_Q} \subseteq C_0(X) \]
for all measures $m$ where $\| \varphi \|_Q = (\|\varphi\|^2 + Q(\varphi))^{1/2}$.  Hence, if a graph satisfies (UT), then $e^{-tL}(C_c(X)) \subseteq D(Q)
\subseteq C_0(X)$ and (FP) follows by continuity of the semigroup with respect to the sup norm.
\item[(iv)]  It is always true that $\sum_{y \in X} p_t(x,y)m(y) \leq 1$.  In particular, if $\inf_x m(x) > 0$, then a graph automatically satisfies (FP) as
pointed out in \cite{Woj17}.  We will improve this result below to allow some decay to 0 on the part of $m$.
\item[(v)]  We mention that there are elliptic viewpoints for both (SI) and (FP) as follows: (SI) is equivalent to the existence of a positive, bounded function $v$ such that $\Lp v \leq \lm v$
for some $\lm <0$, see \cite{KL12}.   (FP) is equivalent to the existence of a positive function $v$ which vanishes at infinity such that $\Lp v \geq \lm v$ for some 
$\lm<0$, see \cite{Woj17}.  We will return to this later.
\item[(vi)]  It follows from the semigroup property and maximum principles that if the graph satisfies (SI) for some $x$ and some $t$, then it satisfies
(SI) for all $x$ and all $t$, see \cite{KL12}.  From the elliptic viewpoint for (FP), it is clear that (FP) satisfies the same property with respect to $x$. 
We will establish that (FP) satisfies an even stronger uniformity with respect to $t$ below, see Lemma~\ref{l:uniformity}.
\end{itemize}
\end{remark}

\subsection{Regular coverings} 
We now make precise the notion of a regular covering in the setting of weighted graphs.  We consider a graph as a 1-dimensional simplicial complex which is a
metric space with respect to the combinatorial graph metric.  
\begin{definition}
We say that a graph
$\ow{G}= (\ow{X},\ow{b},\ow{m})$ is a \emph{regular covering} of $G=(X,b,m)$ if
$(\ow{X},\ow{b},\ow{m})$ is a regular covering space in the topological sense and if the edge weights and measures are such that the deck transformations are
graph isomorphisms.   That is, there exists an onto map 
\[ \pi: \ow{X} \to X\] 
which, for every $\ow{x} \in \ow{X}$, is a graph isomorphism on the neighborhood of $\ow{x}$  and which satisfies 
\[ \ow{b}(\ow{x},\ow{y})= b(x,y) \qquad \textup{ and } \qquad \ow{m}(\ow{x}) = m(x)\] 
for all $\ow{x},\ow{y} \in \ow{X}$ with $\ow{x} \sim \ow{y}$, $\pi(\ow{x}) = x$ and $\pi(\ow{y}) = y$.  We call $\ow{G}$ the \emph{cover} 
and $G$ the \emph{base} in this case.   The set $\pi^{-1}(x)$ is called the \emph{fiber} over $x\in X$ and the cardinality of this set is referred to 
as the \emph{number of sheets} of the covering.
\end{definition}

In particular, note that $\ow{\Lp}(f \circ \pi)(\ow{x}) = \Lp f(x)$ for all $x \in X, \ow{x} \in \ow{X}$ such that $\pi(\ow{x})=x$ and all $f \in C(X)$
where $\ow{\Lp}$ $(\Lp$, respectively) denotes the Laplacian on $\ow{G}$ ($G$, respectively).
Furthermore, as the covering is regular, for every $x \in X$ and all $\ow{x}_1, \ow{x}_2 \in \pi^{-1}(x)$, there exists a deck transformation 
$\gamma$ which is a graph isomorphism such that $\gamma(x_1) = x_2$.  We will denote the set of all deck transformations by $\Gamma$.
In particular, 
\[ \ow{p}_t(\ow{x},\ow{y}) = \ow{p}_t(\gamma (\ow{x}), \gamma (\ow{y})) \]
for all $\gamma \in \Gamma, t \geq 0$ and $\ow{x}, \ow{y} \in \ow{X}$.

\section{The Feller property}\label{s:Feller}
In this section we have a closer look at the Feller property (FP).  First, we show that (FP) satisfies a uniformity in both space and time.  We then give
some new criteria for (FP) to hold on graphs.

\subsection{Uniformity}  We show that if the heat kernel vanishes at infinity for one $t$, then it does so for all $t$.  In fact, we
show that (FP) is equivalent to an even stronger statement with respect to time.  In order to do so, we utilize the semigroup property which
states that $e^{-(s+t)L} = e^{-sL}e^{-tL}$ or, in terms of the heat kernel,
\[ p_{s+t}(x,y) = \sum_{z \in X} p_s(x,z)p_t(z,y)m(z). \]

Recall that a graph satisfies (FP) if $p_t(x, y_n) \to 0$ for all $t>0$ and all $x\in X$ as $y_n \to \infty$.  We will say that a graph satisfies the \emph{uniform Feller property} or (UFP) if
\[ \max_{t \in [0,T]} p_t(x,y_n) \longrightarrow 0 \textup{ as } y_n \to \infty \]
for all $x \in X$, all $T>0$.
We now show that for both of these properties, it suffices that the heat kernel vanishes at infinity at only one time and one vertex. 

\begin{lemma}\label{l:uniformity}
Let $G=(X,b,m)$ be a graph with heat kernel $p$.  The following statements are equivalent:
\begin{itemize}
\item[(i)]  $p_{t_0}(x_0,y_n) \longrightarrow 0$ as $y_n \to \infty$ for some $t_0>0$, some $x_0 \in X$. 
\item[(ii)] $G$ satisfies (FP).
\item[(iii)] $G$ satisfies (UFP).
\end{itemize}
\end{lemma}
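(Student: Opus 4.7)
The implications (iii) $\Rightarrow$ (ii) $\Rightarrow$ (i) are immediate from the definitions, so the content lies in (i) $\Rightarrow$ (iii). My first move is to propagate the hypothesis from the single pair $(t_0, x_0)$ to all shorter times and all base vertices by extracting a single positive term from the semigroup identity. Writing
\[
p_{t_0}(x_0, y_n) = \sum_w p_{t_0 - s}(x_0, w)\, p_s(w, y_n)\, m(w) \ge p_{t_0 - s}(x_0, z)\, p_s(z, y_n)\, m(z),
\]
and using $p_{t_0 - s}(x_0, z) > 0$ (since the graph is connected), one obtains $p_s(z, y_n) \le p_{t_0}(x_0, y_n)/(p_{t_0 - s}(x_0, z)\, m(z)) \to 0$ for every $s \in (0, t_0)$ and every $z \in X$. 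In particular $p_s(z, \cdot) \in C_0(X)$ for all such $s$ and $z$.

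Next I would bootstrap to $s = t_0$ and then, by induction on $\lceil t/t_0 \rceil$, to every $t > 0$. At each stage one writes $p_t(z, y_n) = \sum_w p_{t-s}(z, w)\, p_s(w, y_n)\, m(w)$, choosing $s \in (0, t_0]$ so that $p_{t-s}(z, \cdot) \in C_0(X)$ has been established at the previous stage. Given $\epsilon > 0$, pick a finite $B \subset X$ with $\sup_{w \notin B} p_{t-s}(z, w) < \epsilon$. The sum over $B$ is dominated by $\max_{w \in B} p_s(w, y_n)$, using $\sum_{w \in B} p_{t-s}(z, w)\, m(w) \le 1$, and each term in this finite maximum vanishes as $y_n \to \infty$ by the previous stage; the tail over $B^c$ is bounded by $\epsilon$ via $\sum_w p_s(w, y_n)\, m(w) \le 1$. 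This $(C_0, L^1)$-pairing argument delivers (ii).

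Finally, for the uniformity in time of (iii), I rely on equicontinuity. From the universal bound $p_s(z, y) \le 1/m(z)$ (contractivity of $e^{-tL}$ on $\ell^\infty$),
\[
|\partial_s p_s(x, y_n)| = |\Lp p_s(\cdot, y_n)(x)| \le \frac{1}{m(x)} \sum_{z \sim x} b(x, z) \bigl(\tfrac{1}{m(x)} + \tfrac{1}{m(z)}\bigr) =: C_x,
\]
uniformly in $s$ and $n$, so $\{s \mapsto p_s(x, y_n)\}_n$ is a $C_x$-Lipschitz family on $[0, T]$. Combined with the pointwise convergence from (ii), a standard Arzel\`a--Ascoli argument upgrades this to uniform convergence on $[0, T]$. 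The main difficulty is the bootstrap step: the naive $\ell^1$--$\ell^\infty$ estimate yields only $p_t(x, y_n) \le 1/m(y_n)$, which is useless when $m$ decays along the sequence; the $(C_0, L^1)$-pairing circumvents this obstruction by balancing the $C_0$-factor produced in the first step against a sub-stochastic density.
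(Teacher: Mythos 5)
Your proposal is correct, and the first half follows the paper's route: the extraction of a single positive term from the semigroup identity to pass from $(t_0,x_0)$ to all $z\in X$ and $s<t_0$ is verbatim the paper's step, and your $(C_0,\ell^1)$-pairing bootstrap is just a hands-on version of the paper's one-line ``finite sums plus density plus semigroup'' argument (both rest on $e^{-tL}$ being an $\ell^\infty$-contraction preserving $C_0(X)$). Where you genuinely diverge is in (ii) $\Rightarrow$ (iii). The paper proves the \emph{lower} heat kernel bound $p_t(x,y)m(y)\geq e^{-t\Deg(x)}1_x(y)$ via a subsolution and a maximum principle, then uses the semigroup identity to get the domination $p_t(x,y_n)\leq e^{T\Deg(x)}p_T(x,y_n)$ for all $t\in[0,T]$, so that a single $C_0$ function controls the whole time interval. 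You instead bound $|\partial_s p_s(x,y_n)|$ by a constant $C_x$ using only the crude upper bound $p_s(z,y)\leq 1/m(z)$ and local finiteness, and upgrade pointwise convergence to uniform convergence on $[0,T]$ by equi-Lipschitzness (a finite $\varepsilon$-net in time suffices; no compactness extraction is really needed). Your route is more elementary in that it avoids the maximum-principle/subsolution machinery entirely; the paper's route buys a cleaner quantitative statement, namely an explicit dominating function $e^{T\Deg(x)}p_T(x,\cdot)\in C_0(X)$ for the whole family $\{p_t(x,\cdot)\}_{t\leq T}$, which is in the same spirit as the domination used again in Lemma~\ref{l:uniform}. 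One cosmetic slip: $\partial_s p_s=-\Lp p_s$, not $+\Lp p_s$, but since you only use the absolute value this is harmless.
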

\begin{proof}
(i) $\Longrightarrow$ (ii):  Let $x \in X$ and $t < t_0$.  It then follows from the semigroup property that
\begin{align*}
p_{t_0}(x_0,y_n) &= \sum_{z \in X} p_{t_0-t}(x_0,z) p_t(z,y_n) m(z)\\
&\geq p_{t_0-t}(x_0,x)p_t(x,y_n) m(x).
\end{align*}
Therefore,
\[ p_t(x,y_n) \leq \frac{p_{t_0}(x_0,y_n)}{p_{t_0-t}(x_0,x)m(x)} \longrightarrow 0 \textup{ as } y_n \to \infty \]
for all $x \in X$ and $t < t_0$.
Taking finite sums yields $e^{-tL}(C_c(X)) \subseteq C_0(X)$ for $t < t_0$. A density argument yields $e^{-tL}: C_0(X) \to C_0(X)$ for $t<t_0$ and, finally, the semigroup property gives $e^{-tL}: C_0(X) \to C_0(X)$ for all $t\geq 0$.

(ii) $\Longrightarrow$ (iii):  For a fixed $x \in X$, note that $u(y,t) = e^{-t\Deg(x)}1_x(y)$ is a subsolution for the heat equation, that is,
$(\Lp + \pt) u(y,t) \leq 0$, with $u(y,0) = 1_x(y)$.  By applying a maximum principle such as Proposition~2.2 in \cite{Woj17}, it follows that
$e^{-t\Deg(x)}1_x \leq e^{-tL}1_x$, that is,
\[ e^{-t\Deg(x)}1_x(y) \leq p_t(x,y)m(y). \]
Now, for $T>0$ and $t \in [0,T]$, we get that
\begin{align*}
p_T(x,y_n) &= \sum_{z \in X}p_{T-t}(x,z)p_t(z,y_n)m(z) \\
&\geq e^{-(T-t)\Deg(x)}p_t(x,y_n).
\end{align*}
Therefore,
\[ p_t(x,y_n) \leq e^{(T-t)\Deg(x)}p_T(x,y_n) \leq e^{T\Deg(x)}p_T(x,y_n) \]
for all $t\in [0,T]$.  The conclusion then follows.

(iii) $\Longrightarrow$ (i):  This is clear.
\end{proof}

\subsection{Degree criteria}  We now prove criteria for the Feller property (FP) involving vertex degree quantities. 
Any graph for which the weighted degree $\Deg(x)= \frac{1}{m(x)} \sum_{y \in X} b(x,y)$ is a bounded function on $X$ 
satisfies (FP), see Theorem~4.2 in \cite{Woj17}.  We note that this condition is equivalent to $L$ being a bounded operator on $\ell^2(X,m)$, see \cite{KL12}.

This result was obtained by using the parabolic perspective and gives a counterpart to the result on manifolds which states that if the Ricci curvature
is uniformly bounded from below, then the manifold satisfies (FP), see \cite{Yau 78, Dod83}.  However, in the manifold case, the optimal result for Ricci 
curvature is obtained by using probabilistic methods and allows for some rate of decay, see \cite{Hsu89} and further discussion in Subsection~\ref{s:curvature}.

\bigskip
In order to prove our criteria, we take advantage of the elliptic perspective on the Feller property first pointed out for manifolds in \cite{Aze74} which formally
carries over to the graph setting.  That is, by combining Theorems~3.3~and~3.6 in \cite{Woj17}, we
obtain that a graph satisfies (FP) if and only if there exists a positive function $v \in C_0(X)$ such that
\[ \Lp v \geq \lm v \]
for $\lm <0$.

For a vertex $x_0 \in X$, we let $S_r := S_r(x_0) := \{ x \ | \ d(x,x_0)=r \}$ where $d$ denotes the standard combinatorial graph metric.  We let 
\[ D(r) = \max_{x \in S_r} \Deg(x) \]
denote the maximal degree on a sphere. 
For $x \in S_r$, we let $\Deg_{\pm}(x) = \frac{1}{m(x)}\sum_{y \in S_{r\pm1}} b(x,y)$ denote the \emph{outer} and \emph{inner degree} of $x$
and let
\[ D_{\pm}(r) = \max_{x \in S_r} \Deg_\pm(x) \quad \textup{ and } \quad d_{\pm}(r) = \min_{x \in S_r} \Deg_\pm(x). \]
\begin{theorem}\label{t:Feller1}
Let $G=(X,b,m)$ be a graph.
\begin{itemize}
\item[(i)] If for some vertex $x_0 \in X$
\[ \sum_{r} \frac{1}{D_-(r)} = \infty, \]
then the graph satisfies (FP).
\item[(ii)] If for some vertex $x_0 \in X$
\[ \sum_r \frac{D(r)-d_{-}(r)+1}{d_{-}(r)} < \infty, \]
then the graph does not satisfy (FP).
\end{itemize}
\end{theorem}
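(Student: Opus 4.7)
My plan in both parts is to work through the elliptic characterization of (FP) recalled just above: the graph satisfies (FP) if and only if there is a positive $v \in C_0(X)$ with $\Lp v \geq \lm v$ for some $\lm < 0$. A radial ansatz $v(x) = f(d(x,x_0))$ is natural, since for $x \in S_r$ with $r \geq 1$ the same-sphere contributions cancel and
\[ \Lp v(x) = \Deg_-(x)\bigl(f(r)-f(r-1)\bigr) + \Deg_+(x)\bigl(f(r)-f(r+1)\bigr). \]
The case $r=0$ reduces to $\Lp v(x_0) = \Deg(x_0)(f(0)-f(1))$ and will be harmless in both directions.

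For (i), I will construct $f$ decreasing and positive with $f(r) \to 0$. Since $f(r)-f(r+1) \geq 0$, the $\Deg_+$-term only helps the inequality $\Lp v \geq -|\lm|\,v$, so it suffices to verify the worst case after discarding it, namely
\[ D_-(r)\bigl(f(r-1) - f(r)\bigr) \leq |\lm|\, f(r) \quad \text{for all } r \geq 1. \]
Setting $|\lm|=1$ and $f(r) := \prod_{s=1}^r (1+1/D_-(s))^{-1}$ turns this into equality. It then remains to show $f(r)\to 0$; via $\log(1+x) \geq x/(1+x)$ this reduces to $\sum 1/D_-(r) = \infty$, which is the hypothesis.

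For (ii), I will argue by contradiction: suppose such a $v$ exists with associated $\lm<0$, and set $L(r) := \min_{S_r} v > 0$. Picking $x^* \in S_r$ with $v(x^*)=L(r)$, the inequality $\Lp v(x^*) \geq \lm v(x^*)$ rearranges to
\[ \frac{1}{m(x^*)}\sum_{y \sim x^*} b(x^*,y)\, v(y) \leq \bigl(\Deg(x^*)+|\lm|\bigr)\, L(r). \]
Discarding the nonnegative contributions from $S_r$- and $S_{r+1}$-neighbors and using $v(y) \geq L(r-1)$ on $S_{r-1}$, combined with $\Deg_-(x^*) \geq d_-(r)$ and $\Deg(x^*) \leq D(r)$, produces the one-step estimate
\[ L(r-1) \leq \frac{D(r)+|\lm|}{d_-(r)}\, L(r) = \left(1 + \frac{D(r)-d_-(r)+|\lm|}{d_-(r)}\right) L(r). \]
Telescoping down to $r=0$ gives $L(r) \geq L(0) \prod_{s=1}^r (1+(D(s)-d_-(s)+|\lm|)/d_-(s))^{-1}$. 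The hypothesis $\sum (D(r)-d_-(r)+1)/d_-(r) < \infty$ implies both $\sum 1/d_-(r) < \infty$ and $\sum (D(r)-d_-(r))/d_-(r) < \infty$, so the infinite product converges to a positive value and $L(r) \geq c > 0$ uniformly. But $v \in C_0(X)$ forces $L(r) \leq \max_{S_r} v \to 0$, a contradiction.

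The main obstacle I expect lies in the bookkeeping in (ii): one must work at a \emph{minimum}-attaining vertex $x^*$ on $S_r$ (rather than a maximum-attainer, whose $S_{r-1}$-neighbors carry no useful lower bound after discarding other sphere contributions), and then orient the extremal bounds $\Deg_-(x^*) \geq d_-(r)$ and $\Deg(x^*) \leq D(r)$ in the correct directions so that the resulting inequality points the right way. Replacing the $+1$ in the hypothesis by the $+|\lm|$ that actually appears in the argument is harmless, since the difference amounts to $(|\lm|-1)\sum 1/d_-(r)$, and this auxiliary series is already convergent under the assumption.
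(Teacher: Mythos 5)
Your proposal is correct and follows essentially the same route as the paper: part (i) builds the same radial product supersolution $v(r)=\prod_{i\le r} D_-(i)/(D_-(i)-\lm)$ (you just normalize $\lm=-1$), and part (ii) derives the same one-step bound $v(x)\ge \frac{d_-(r)}{D(r)-\lm}\,\min_{S_{r-1}}v$, which the paper telescopes as a direct induction $v\ge w$ rather than your contradiction via $L(r)=\min_{S_r}v$, an entirely cosmetic difference. Your closing remarks about working at sphere-minimizers and about $+1$ versus $+|\lm|$ match what the paper does implicitly.
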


This yields the following immediate corollary. 
\begin{corollary}Let $G=(X,b,m)$ be a graph. If for some vertex $x_0 \in X$
\[ D_-(r)=O(r), \quad \mathrm{as}\ r\to \infty,\]
then the graph satisfies (FP).
\end{corollary}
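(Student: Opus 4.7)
The plan is to apply Theorem~\ref{t:Feller1}(i) directly. All we need to verify is that the growth hypothesis $D_-(r) = O(r)$ implies the divergence $\sum_r 1/D_-(r) = \infty$, after which (FP) is immediate.

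First I would unpack the big-$O$ hypothesis: there exist constants $C>0$ and $r_0 \in \N$ such that $D_-(r) \leq C r$ for every $r \geq r_0$. Next I would observe that by connectedness of $G$, every vertex $x \in S_r$ with $r \geq 1$ must have at least one neighbor in $S_{r-1}$ (the previous vertex on a shortest path from $x$ to $x_0$), so $\Deg_-(x) > 0$ and hence $D_-(r) > 0$ for all $r \geq 1$. This makes $1/D_-(r)$ well-defined on the tail.

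I would then estimate
\[
\sum_{r \geq r_0} \frac{1}{D_-(r)} \;\geq\; \sum_{r \geq r_0} \frac{1}{C r} \;=\; \infty,
\]
using the divergence of the harmonic series. Adding the finitely many initial terms (which are finite) preserves divergence, so the hypothesis of Theorem~\ref{t:Feller1}(i) is satisfied and (FP) follows.

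There is essentially no obstacle here; the only subtle point worth recording is the justification that $D_-(r) > 0$ for $r \geq 1$, which uses connectedness of the graph to rule out a degenerate denominator. Everything else is a one-line comparison with the harmonic series.
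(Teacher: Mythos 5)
Your proposal is correct and matches the paper's intent exactly: the corollary is stated as an immediate consequence of Theorem~\ref{t:Feller1}(i), with the only content being the comparison $\sum_r 1/D_-(r) \geq \sum_r 1/(Cr) = \infty$. Your extra remark that connectedness guarantees $D_-(r)>0$ for $r\geq 1$ is a sound (if routine) piece of bookkeeping that the paper leaves implicit.
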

\begin{remark}
We contrast the conditions in Theorem~\ref{t:Feller1} with some related criteria for (SI).  Namely, in \cite{Woj08, Woj09}, it is shown that for graphs with standard edge weights and measure if $\sum \frac{D_{-}(r)+1}{d_+(r)} < \infty$, then a
graph satisfies (SI).  This was later improved in \cite{Hua11} to $\sum_r \max_{x \in S_r} \frac{\Deg_-(x)}{\Deg_+(x)} <\infty$ by using the weak Omori-Yau maximum principle for (SI).  Furthermore, in \cite{Woj11} it is shows that if $\sum_r \frac{1}{D_+(r)} = \infty$, then a graph does not satisfy (SI).  

Note that the conditions
for (FP) and (SI) are opposite in some sense.  The reason is that (SI) requires large growth while (FP) holds either due to large growth or small
growth.  The conditions presented here for (FP) have do with small growth.
\end{remark}
\begin{proof}[Proof of Theorem~\ref{t:Feller1}]
As mentioned above, (FP) is equivalent to the existence of a positive function $v \in C_0(X)$ such that $\Lp v \geq \lm v$ for $\lm<0$.  

For the proof of (i), we construct such a function depending only on the distance to $x_0$.  That is, let $v(x_0) = 1$ and for any
$x \in S_r$ for $r\geq 1$, let 
\[ v(r):= v(x) := \prod_{i=1}^r \frac{D_-(i)}{D_-(i) - \lm}. \]
As $\lm<0$, $v$ is decreasing as the distance to $x_0$ increases and as 
\[ \frac{1}{v(x)} = \prod_{i=1}^r \left( 1 - \frac{\lm}{D_-(i)} \right) \longrightarrow \infty \textup{ as } r \to \infty \]
by the assumption that $\sum_r \frac{1}{D_-(r)}=\infty$, it follows that $v \in C_0(X)$.  

We observe that $\Lp v(x_0) \geq 0 \geq \lambda v(x_0)$.
Finally, for $x \in S_r$, $r\geq1$, 
we get that
\begin{align*}
\Lp v(x) &= \Deg_+(x)(v(r)-v(r+1)) + \Deg_-(x)(v(r)-v(r-1)) \\
&\geq \Deg_-(x)v(r)\left(1-\frac{D_-(r) - \lm}{D_-(r)}\right) \\
&= \Deg_-(x)v(r)\left( \frac{\lm}{D_-(r)} \right) \\
&\geq \lm v(r) = \lm v(x).
\end{align*}
This finishes the proof of (i).

For (ii), let $v>0$ satisfy $\Lp v \geq \lm v$ for $\lm<0$.  Such a positive function exists as the resolvent is positivity improving in the case that the graph is connected, see \cite{KL12}.  Let $w(0) = v(x_0)$ and, for $r\geq1$,
\[ w(r) = \prod_{i=1}^r \left( \frac{d_-(i)}{D(i)-\lm} \right) v(x_0). \]
Note that as
\[ \frac{1}{w(r)} = \prod_{i=1}^r \left( 1 + \frac{D(i) - d_-(i) - \lm}{d_-(i)} \right) \not \longrightarrow \infty \textup{ as } r \to \infty \]
since $\sum_r \frac{D(r)-d_{-}(r)+1}{d_{-}(r)} < \infty$ it follows that $w \not \in C_0(X)$.  

We now claim by induction on $r$ that $v \geq w$.  For $r=0$ this is clear by definition.  Now, assume that $v(y) \geq w(r-1)$
for all $y \in S_{r-1}$ and let $x \in S_r$.  Then
\begin{align*}
\lm v(x) \leq \Lp v(x) &\leq \Deg(x) v(x) - \frac{1}{m(x)} \sum_{y \in S_{r-1}} b(x,y) v(y) \\
&\leq \Deg(x) v(x) - \Deg_-(x) w(r-1)
\end{align*}
from which it follows that
\[ v(x) \geq \left(\frac{\Deg_-(x)}{\Deg(x)-\lm} \right)w(r-1) \geq \left( \frac{d_-(r)}{D(r)-\lm} \right) w(r-1) = w(r). \]
This completes the proof as it follows that $v \not \in C_0(X)$.
\end{proof}

\subsection{Curvature and the Feller property}\label{s:curvature}
As noted above, Theorem~\ref{t:Feller1} (i) extends Theorem 4.2 from  \cite{Woj17} which gives (FP) in the case of uniformly bounded degree. 
The result on bounded degree in \cite{Woj17}, at least in terms of the proof, is an analogue to \cite{Yau78, Dod83} stating that any Riemannian manifold with Ricci curvature uniformly bounded from below will satisfy (FP).  

An optimal criterion for (FP) in terms of Ricci curvature in the setting of Riemannian manifolds is proven via probabilistic techniques in
\cite{Hsu89}.  Namely, Hsu shows that if $\kappa(r)$ is a lower bound on the Ricci curvature of a geodesic ball of radius $r$,
then
\[ \int^\infty \frac{1}{\sqrt{|\kappa(r)|}} dr = \infty \]
implies (FP). 

Recently, there has been a tremendous interest in various notions of curvature, and especially of Ricci curvature, for graphs, see, for example, \cite{Sch99, LY10,LLY11,BJL12, JL14, BHLLMY15, HL16, Mun17, MW, johnson2015discrete, Oll09,
ni2015ricci,maas2017entropic,rubleva2016ricci,
eldan2017transport,kempton2017relationships,
yamada2017curvature,gao2016one,lin2012ricci,
liu2017rigidity,gao2016curvature,liu2017distance,
lin2013ricci,lin2015equivalent,munch2014li,
liu2016bakry,cushing2016bakry,saucan2009combinatorial,
bhattacharya2015exact,fathi2016entropic,
ollivier2012curved,paeng2012volume,
sandhu2015graph,wang2014wireless,chung2014harnack,
fathi2018curvature,erbar2012ricci}.
In particular, two notions have been most prominently explored for finding analogues to results in the setting of Riemannian manifolds for graphs:
the Bakry-{\'E}mery approach having its origins in \cite{BE85} and the coarse Ollivier-Ricci curvature originating in the work \cite{Oll09}.
As such, given the results on Riemannian manifolds mentioned above, it would seem natural to ask if there is a Ricci curvature criterion for (FP) using 
these new curvature notions. 
In this subsection we discuss that this is, in general, not the case.  In particular, we show that there exist graphs
which satisfy arbitrary lower curvature bounds for both the Ollivier-Ricci and Bakry-{\'E}mery curvatures but do not satisfy (FP).

\bigskip

We first briefly discuss the definitions of the two curvatures mentioned above.
First, the Ollivier-Ricci curvature originally defined via optimal transport in \cite{Oll09} and modified in \cite{LLY11} was recently extended to general graph Laplacians
in \cite{MW}.  Although, we do not give the definition here, we mention that this curvature can be calculated explicitly for large classes of graphs.
In particular, for any graph satisfying $X = \N_0$ with $b(x,y)>0$ if and only if $|x-y|=1$, it follows that $\kappa(r):=\kappa(r-1,r)$, the curvature between adjacent 
vertices $r-1$ and $r$, can be calculated as
\begin{align}\label{e:curvature}
\kappa(r) &= \frac{b(r-1,r)-b(r-1,r-2)}{m(r-1)} - \frac{b(r,r+1)-b(r,r-1)}{m(r)}\\
&=\big(d_+(r-1) - d_-(r-1) \big) - \big( d_+(r)-d_-(r) \big)\nonumber
\end{align}
where we let $d_+(r)=b(r,r+1)/m(r)$ and $d_-(r)=b(r-1,r)/m(r)$ as above and let $d_-(0)=0$.

Secondly, Bakry-{\'E}mery curvature is defined via a Bochner formula as follows. The maximal lower Bakry-{\'E}mery curvature bound $K_{BE} \in C(X)$ is given by
\[ K_{BE}:= \sup\{K \in C(X): \Gamma_2(f,f) \geq K \Gamma_1(f,f), \forall f \in C(X)\} \]
where $\Gamma_0(f,g):=f\cdot g$ and for $k\geq 1$,
\[ \Gamma_k(f,g) := - \Lp \Gamma_{k-1}(f,g) + \Gamma_{k-1}(\Lp f, g) + \Gamma_{k-1}( f, \Lp g). \]

Using these formulas, we can construct examples of graphs which do not satisfy (FP) with both Ollivier-Ricci and Bakry-{\'E}mery  curvature satisfying arbitrary lower bounds.

\begin{proposition}\label{p:curvature_feller}
For every sequence $(k_r)_{r \in \N}, k_r \in \R$, there exists a graph with  $X= \N_0$ which does not satisfy (FP) such that both $\kappa(r) \geq k_r$ and $K_{BE}(r) \geq k_r$.
\end{proposition}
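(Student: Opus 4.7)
The plan is to construct, for any given sequence $(k_r)$, a single birth-death chain on $\N_0$---that is, a graph with $b(x,y)>0$ iff $|x-y|=1$---whose edge weights and vertex measure simultaneously produce large Ollivier-Ricci curvature via formula (\ref{e:curvature}), large Bakry-\'Emery curvature, and failure of (FP) via Theorem~\ref{t:Feller1}~(ii). The key idea is to normalize the outer degrees $d_+(r)$ to the constant $1$ while letting the inner degrees $d_-(r)$ grow as fast as required.

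Concretely, pick a positive sequence $M_r$ (to be fixed below) and set $m(0):=1$, $m(r):=m(r-1)/M_r$ for $r\geq 1$, and $b(r,r+1):=m(r)$. Then $d_+(r)=b(r,r+1)/m(r)=1$ and, for $r\geq 1$, $d_-(r)=b(r,r-1)/m(r)=m(r-1)/m(r)=M_r$. Formula (\ref{e:curvature}) now yields
\[ \kappa(r)=(1-M_{r-1})-(1-M_r)=M_r-M_{r-1}, \]
while $D(r)-d_-(r)+1=d_+(r)+1=2$, so Theorem~\ref{t:Feller1}~(ii) forces failure of (FP) whenever $\sum_{r\geq 1} 2/M_r<\infty$.

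The substantive work is to verify $K_{BE}(r)\geq k_r$. A direct expansion on the birth-death chain expresses $\Gamma_2(f,f)(r)$ as a quadratic form in the consecutive differences $\phi(j):=f(j)-f(j-1)$ for $j$ near $r$, with coefficients that are polynomials in $d_\pm(r-1),d_\pm(r),d_\pm(r+1)$. Inspection reveals that terms quadratic in $M_{r\pm 1}$ appear in $\Gamma_2$, while $\Gamma_1(f,f)(r)$ contains only terms linear in $M_r$; completing the square in the $\phi$'s then shows that $K_{BE}(r)\to\infty$ as the surrounding $M_j$'s are driven to infinity.

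The construction is finished by defining $M_r$ recursively to be the maximum of $r^2$, of $M_{r-1}+k_r$ (which ensures $\kappa(r)\geq k_r$), and of the threshold produced by the Bakry-\'Emery analysis (which ensures $K_{BE}(r)\geq k_r$). Since $M_r\geq r^2$, the series $\sum 2/M_r$ converges, giving failure of (FP). The main obstacle is the Bakry-\'Emery step: producing an explicit quantitative lower bound on $K_{BE}(r)$ in terms of the local data $(M_{r-1},M_r,M_{r+1})$ calls for a somewhat delicate quadratic-form analysis, since the cross terms coupling $\phi(r),\phi(r+1),\phi(r+2)$ enter the $\Gamma_2/\Gamma_1$ ratio nontrivially.
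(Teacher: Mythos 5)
Your construction is the same as the paper's: a birth--death chain on $\N_0$ with $d_+\equiv 1$ and $d_-(r)=M_r$ growing fast enough that $\sum_r 1/M_r<\infty$, so Theorem~\ref{t:Feller1}~(ii) kills (FP), while \eqref{e:curvature} gives $\kappa(r)=M_r-M_{r-1}\geq k_r$ by choosing the increments. Up to that point the argument is correct and matches the paper.

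The Bakry--\'Emery step, however, is a genuine gap, and you flag it yourself as ``the main obstacle'' without closing it. Worse, the heuristic you offer in its place is false: it is not true that $K_{BE}(r)\to\infty$ as the surrounding $M_j$ are driven to infinity. On a birth--death chain with $d_+\equiv 1$, the relevant quantity is controlled by the \emph{increments} $M_r-M_{r-1}$, not by the absolute size of the $M_j$; for instance $M_r=r+N$ with $N$ huge makes every $d_-(r)$ arbitrarily large while $K_{BE}(r)$ stays bounded. The paper closes this step not by a fresh quadratic-form analysis but by invoking the closed-form characterization of $K_{BE}$ for such chains from Section~2 of \cite{HM17}: $K_{BE}(r)\geq k_r$ is equivalent to $W_-(r)\geq 0$, $W_+(r)\geq 0$ and $W_-(r)W_+(r)\geq 4d_-(r)d_+(r)$, where $W_-(r)=-d_-(r-1)+3d_+(r-1)+d_-(r)-d_+(r)-2k_r$ and $W_+(r)=-d_+(r+1)+3d_-(r+1)+d_+(r)-d_-(r)-2k_r$. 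With $d_+\equiv 1$ these become $W_-(r)=M_r-M_{r-1}+2-2k_r$ and $W_+(r)=3M_{r+1}-M_r-2k_r$, and the paper verifies all three inequalities under the increment condition $M_r-M_{r-1}\geq 2(k_r\vee k_{r-1})$ together with monotonicity of $M_r$ (giving $W_-\geq 2$, $W_+\geq 2M_{r+1}$, hence $W_-W_+\geq 4M_r$). To repair your proof, replace the vague ``threshold produced by the Bakry--\'Emery analysis'' by this explicit increment condition (which is compatible with your recursive choice of $M_r$ and with $M_r\geq r^2$), or else actually carry out the completion-of-squares computation you defer --- as written, the claim $K_{BE}(r)\geq k_r$ is unproved.
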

\begin{proof}
We let $X = \N_0$ with $b(x,y)>0$ if and only if $|x-y|=1$.  As above, we let $d_+(r)= b(r,r+1)/m(r)$ and $d_-(r)=b(r-1,r)/m(r)$ and 
we first choose $b$ and $m$ to satisfy $d_+(r)=1$ and $\sum_r \frac 1 {d_-(r)}<\infty.$
 Then, applying Theorem~\ref{t:Feller1} shows that the graph does not satisfy (FP).

Furthermore, for every $k_r \in \R$, we can make the choices 
$d_-(r)-d_-(r-1) \geq k_r$. Then, 
$\kappa(r) \geq k_r$ follows from \eqref{e:curvature}.

For Bakry-{\'E}mery curvature, we further refine the choices of $d_-(r)$ to be increasing and to satisfy
$d_-(r)-d_-(r-1) \geq 2(k_r \vee k_{r-1})$.
A straight-forward computation gives that $K_{BE}(r) \geq k_r$ is equivalent to
$W_-(r) \geq 0$ and $W_+(r) \geq 0$ and $W_-(r)W_+(r) \geq 4d_-(r)d_+(r)$ with
\[
W_-(r) := -d_-(r-1) + 3d_+(r-1) + d_-(r) - d_+(r) - 2k_r
\]
and
\[
W_-(r) := -d_+(r+1) + 3d_-(r+1) + d_+(r) - d_-(r) - 2k_r,
\] 
see Section~2 in \cite{HM17}.
Since $d_-(r) - d_-(r-1) \geq 2 k_r$, we have $W_-(r) \geq 2$. Since $d_-(r+1) - d_r \geq 2 k_r$, we have
$W_+(r) \geq 2d_-(r+1)$. Since $d_-(r)$ is increasing, we obtain $W_-(r)W_+(r) \geq 4d_-(r)d_+(r)$ which proves that $K_{BE}(r) \geq k_r$.
\end{proof}

\begin{remark}
We note that in the example above the curvatures turn out to be positive.
In fact, with a bit more effort, we can show that for any sequence $k_r \in \R$, there exist graphs such as above which do not satisfy (FP) 
and which have Ollivier-Ricci curvature $\kappa(r)=k_r$.  This is surprising given the manifold case were, for example, all Cartan-Hadamard manifolds satisfy (FP), 
see \cite{Aze74, PS12}.

To show this, we again take $X = \N_0$ with $b(x,y)>0$ if and only if $|x-y|=1$.
We first set $m(0)=1$ and $b(0,1)=2$ giving $\Deg(0)=2$.
Observe that by iterating \eqref{e:curvature}, $\kappa(r)=k_r$ for all $r \in \N$ is equivalent to
\[
\frac{b(r,r+1) - b(r,r-1)}{m(r)} = \Deg(0)- \sum_{j=1}^{r} k_j =: C_r.
\]
The idea of the construction relies on the following two observations:
First, if we choose $m(r)$ small enough, then we can guarantee $b(r,r+1)$ is uniformly bounded above and below by a constant. 
We can, for example, set $m(r)$ such that $|C_r| m(r) \leq 2^{-r}$ and $b(r,r+1):=b(r,r-1) + C_r m(r)$ yielding $|b(r,r+1)-b(r-1,r)| \leq 2^{-r}$ which gives $b(r,r+1) \in \left[1, 3\right]$ for all $r \in \N_0$. Moreover, the inductive definition of $b(r,r+1)$ guarantees that $\kappa(r)=k_r$.

Second, if we choose $m(r)$ small enough, then, we can guarantee 
\[
\sum_{r=1}^\infty m(\{r,r+1,\ldots\}) < \infty.
\]
This, in particular, holds true if $m(r)<2^{-r}$.

To satisfy both of these conditions, we can simply set $m(r):= \frac{2^{-r}}{1+|C_r|}$.

We easily see that $\sum_r \frac 1 {b(r,r+1)} = \infty$ since $b(r,r+1) \leq  3 $. Moreover, since $b(r,r+1)\geq 1$,
\[
\sum_r \frac {m(\{r,r+1,\ldots\})}{b(r,r-1)} \leq  \sum_r m(\{r,r+1,\ldots\}) < \infty.
\]
This implies that the graph does not satisfy the Feller property due to Theorem~4.13 in \cite{Woj17}.

\end{remark}

\subsection{An intrinsic metric criterion}  As previously mentioned, if the vertex measure is uniformly bounded from below by a positive constant, then the graph satisfies (FP).  We now improve this by allowing the measure  to decay to 0.  The rate of decay will involve the use of intrinsic metrics and a heat kernel estimate obtained in \cite{BHY17}.

\bigskip

We first introduce some concepts related to intrinsic metrics.  For the full theory of intrinsic metrics for non-local Dirichlet forms, which extends the framework of
local Dirichlet forms with killing term in \cite{Stu94}, see \cite{FLW14}.  For other applications for graphs, see the survey \cite{Kel15}.

\begin{definition}
We say that a metric $\rho:X \times X \to [0,\infty)$ is \emph{intrinsic} if
\[ \sum_{y \in X} b(x,y) \rho^2(x,y) \leq m(x) \]
for all $x \in X$.
We say that an intrinsic metric has \emph{finite jump size} $j>0$ if $\rho(x,y) \leq j$ for all $x \sim y$.   Finally, an intrinsic metric is called
\emph{proper} if all balls defined with respect to $\rho$ are finite.
\end{definition}
\begin{example}\label{e:intrinsic}
A standard example for graphs first found in \cite{Hua11a} is to let 
\[ \rho(x,y) = \left( \max \{ \Deg(x), \Deg(y) \} \right) ^{-1/2} \]
for all $x \sim y$ and then extend this to all vertices via paths.  It will have finite jump size if the weighted degree is uniformly bounded
from below and will be proper given that the weighted degree does not grow too rapidly.
\end{example}

We now recall a heat kernel estimate which follows from the Davies-Gaffney-Girgor'yan Lemma for graphs proven in  \cite{BHY17} 
(see also \cite{Pan93, Dav93, Dav93b, Del99, MS00, Sch02, Fol11, BHY15} for earlier work).  Namely, if $\rho$
is a proper intrinsic metric with finite jump size $j$, we get the following estimate for the heat kernel:
\begin{equation}\label{e:estimate}
 p_t(x,y) \leq \frac{1}{\sqrt{m(x)m(y)}}  \exp{(- \zeta_j(t,\rho(x,y)))}
\end{equation}
where
\[ \zeta_j(t,r) = \frac{1}{j^2}\left( jr \cdot \textup{arsinh} \left(\frac{jr}{t}\right) - \sqrt{t^2 + (jr)^2} + t \right). \]

We note that $\rho$ is not needed to be proper for the result above to hold.  However, we need that $y_n \to \infty$ if and only if $\rho(x, y_n) \to \infty$
for our result below and at this point we need the metric to be proper.

\begin{theorem}\label{t:Feller2}
Let $G=(X,b,m)$ be a graph with a proper intrinsic metric $\rho$ with finite jump size $j>0$.  If for some $x_0 \in X$ and some $C>0$ one has
\begin{equation*}
{- \log m(y)} \leq \frac{2 \rho(x_0,y)}{j} (\log \rho(x_0,y) + C)
\end{equation*}
for all $y \in X$, then $G$ satisfies (FP). 
\end{theorem}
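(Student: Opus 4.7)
The plan is to feed the hypothesis on $m$ directly into the heat kernel estimate \eqref{e:estimate}, verify that the resulting upper bound tends to zero as $y_n \to \infty$ at some sufficiently small time, and then invoke Lemma~\ref{l:uniformity} to upgrade this to (FP). Set $r := \rho(x_0, y)$. Since $\rho$ is proper, $y_n \to \infty$ if and only if $r_n := \rho(x_0, y_n) \to \infty$, so it suffices to prove that $p_{t_0}(x_0, y) \to 0$ as $r \to \infty$ for some fixed small $t_0 > 0$. The hypothesis rewrites as
\[ \frac{1}{\sqrt{m(y)}} \;\leq\; \exp\!\left(\frac{r\log r}{j} + \frac{Cr}{j}\right), \]
so combining with \eqref{e:estimate} reduces the problem to showing that $\zeta_j(t_0, r)$ exceeds $\frac{r\log r}{j} + \frac{Cr}{j}$ by a quantity that tends to $+\infty$ as $r \to \infty$.

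The main step, and the only genuinely technical one, is therefore the asymptotic analysis of $\zeta_j(t,r)$ for $r$ large and $t$ fixed. With the substitution $u = jr/t$ one has $\zeta_j(t,r) = (t/j^2)\bigl(u\,\textup{arsinh}(u) - \sqrt{1+u^2} + 1\bigr)$, and expanding $\textup{arsinh}(u) = \log(2u) + O(1/u^2)$ and $\sqrt{1+u^2} = u + O(1/u)$ as $u \to \infty$ yields
\[ \zeta_j(t,r) \;=\; \frac{r\log r}{j} \;+\; \frac{r}{j}\log\!\left(\frac{2j}{t}\right) \;-\; \frac{r}{j} \;+\; O(t/r). \]
Substituting this into \eqref{e:estimate} together with the bound on $1/\sqrt{m(y)}$ above shows that the full exponent in the resulting upper bound on $p_t(x_0,y)$ equals, up to an additive constant depending only on $x_0$,
\[ \frac{r}{j}\bigl(C + 1 - \log(2j/t)\bigr) \;+\; o(r). \]

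Now one simply chooses $t_0$ small enough that $\log(2j/t_0) > C + 1$, for instance any $t_0 < 2j\,e^{-(C+1)}$; this forces the displayed exponent to tend to $-\infty$ as $r \to \infty$, so $p_{t_0}(x_0, y_n) \to 0$. Lemma~\ref{l:uniformity} then promotes this single-time statement to (FP). The restriction to small time is not a genuine loss, since the semigroup property (used already in Lemma~\ref{l:uniformity}) propagates the conclusion to all times; conceptually, the hypothesized decay rate on $m$ is precisely calibrated so that the Gaussian-type factor $\exp(-\zeta_j(t,r))$ in the heat kernel estimate wins over the $1/\sqrt{m(y)}$ blow-up, but only once $t$ is taken small enough that the constant $\log(2j/t)$ beats $C+1$.
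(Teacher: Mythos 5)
Your proposal is correct and follows essentially the same route as the paper: both feed the hypothesis into the heat kernel estimate \eqref{e:estimate}, bound $\zeta_j(t,r)$ from below by $\tfrac{r}{j}(\log r + C + 1)$ for $t$ small (you via the asymptotic expansion of $\textup{arsinh}$, the paper via the exact inequalities $\textup{arsinh}(\alpha)\geq\log\alpha$ and $\alpha-\sqrt{\alpha^2+\beta^2}\geq-\beta$), and then invoke properness of $\rho$ together with Lemma~\ref{l:uniformity}. The only cosmetic difference is that the paper's non-asymptotic bounds avoid tracking the $o(r)$ error terms, but the argument is the same.
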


\begin{proof}[Proof of Theorem~\ref{t:Feller2}]
We aim to show that $p_t(x_0,y) \to 0$ as $y \to \infty$ by using \eqref{e:estimate}. We remark that due to Lemma~\ref{l:uniformity}, it suffices to show that $p_t(x_0,y) \to 0$ for some small $t>0$.
First, we note that
\[
\zeta_j(t,r) \geq \frac r j \log \left(\frac {jr}{t}\right) - \frac r j \geq \frac r j (\log r + C+1)
\]
if $t>0$ is chosen small enough, where the first inequality follows from
$\textup{arsinh}(\alpha) \geq \log \alpha$ and $\alpha - \sqrt{\alpha^2 + \beta^2} \geq -\beta$ for $\alpha,\beta >0$.

Hence by \eqref{e:estimate},
\begin{align*}
\log p_t(x_0,y) &\leq -\frac 1 2 \log m(x_0) - \frac 1 2 \log m(y) - \zeta_j(t,\rho(x_0,y)) \\
&\leq -\frac 1 2 \log m(x_0) - \frac 1 2 \log m(y) -  \frac {\rho(x_0,y)}j  (\log \rho(x_0,y) + C+1)\\
& \leq -\frac 1 2 \log m(x_0) - \frac {\rho(x_0,y)}j
\end{align*}
where the last estimate follows by assumption.
This implies that $p_t(x_0, \cdot) \in C_0(X)$ since $\rho$ is proper which finishes the proof.
\end{proof}

\begin{example}
We give an example for which Theorem~\ref{t:Feller1} does not apply but Theorem~\ref{t:Feller2} does.  
For two positive functions $f, g: X \to (0,\infty)$ we will write $f \sim g$ if there exist positive constants $c_1, c_2>0$
such that $c_1 f(x) \leq g(x) \leq c_2 f(x)$ for all $x \in X$.

Let $X=\N$ with $b(x,y) = 1$ if $|x-y|=1$ and 0 otherwise and $m(r) \sim 1/r^2$ for $r \in \N$.  Then $\Deg_-(r) \sim r^2$ so that Theorem~\ref{t:Feller1} does not apply.
However, by using the intrinsic metric in Example~\ref{e:intrinsic}, we get that $\rho(r,r+1) \sim 1/r$ so that $\rho(0,r)\sim \log r$.  Hence, the metric is 
proper and has finite jump size $j$.  Therefore,
\[
-\log m(r) \sim \log r \leq C \rho(0,r)
\]
for all $r \in \N$ so that Theorem~\ref{t:Feller2} shows that such a graph satisfies (FP).
\end{example}

\section{Coverings and the heat equation}\label{s:coverings}
We now prove some connections between properties of the heat kernel on a graph and a covering of the graph.  We will see that a graph is stochastically incomplete
if and only if a covering of the graph is stochastically incomplete and the same is true for the Feller property and uniform transience in the case of finitely many sheets.  In contrast, for coverings with infinitely many sheets, 
only one implication holds for both the Feller property and for uniform transience, namely,
if the base satisfies (FP) or (UT), then the cover will also satisfy (FP) or (UT).  We show by example that the other implications do not hold.

\subsection{Heat kernel on the base and that on the cover}
In order to prove these results, we show a very simple relation between the heat kernel on the base and the heat kernel on the cover following work on manifolds
found in \cite{Bor00, Li12}.
\bigskip

Given a regular covering $\pi: \ow{X} \to X$ and the heat kernel $\ow{p}$ on $\ow{G}$, we define a new function on the base graph as follows:
\[ q_t(x,y) = \sum_{\ow{y} \in \pi^{-1}(y)} \ow{p}_t(\ow{x},\ow{y}) \]
for $t\geq0$, $x,y \in X$ where $\ow{x} \in \pi^{-1}(x)$.
We will prove that $q_t(x,y) = p_t(x,y)$ where $p$ is the heat kernel on $G$.  
Throughout, we will let $L$ denote the Laplacian on $G$ while $\ow{L}$ will denote the Laplacian on $\ow{G}$.
We will also put a subscript to indicate in which variable the Laplacian is being applied when necessary.

We first show that $q_t(x,y)$ is well-defined.
That is, if $\ow{x}_1,\ow{x}_2 \in \pi^{-1}(x)$, then we must show that 
$\sum_{\ow{y} \in \pi^{-1}(y)} \ow{p}_t(\ow{x}_1,\ow{y})= \sum_{\ow{y} \in \pi^{-1}(y)} \ow{p}_t(\ow{x}_2,\ow{y})$.
As the covering is regular, there exists a deck transformation $\gamma$ such that $\gamma(\ow{x}_1) = \ow{x}_2$.
It then follows that
\begin{align*} 
\sum_{\ow{y} \in \pi^{-1}(y)} \ow{p}_t(\ow{x}_2,\ow{y}) &= \sum_{\ow{y} \in \pi^{-1}(y)} \ow{p}_t(\gamma (\ow{x}_1),\ow{y}) \\
&= \sum_{\ow{y} \in \pi^{-1}(y)} \ow{p}_t( \ow{x}_1,\gamma^{-1} (\ow{y})) = \sum_{\ow{y} \in \pi^{-1}(y)} \ow{p}_t( \ow{x}_1, \ow{y})
\end{align*}
since $\gamma$ is an isomorphism and $\Gamma$ acts transitively on each fiber.  Thus, $q$ is well-defined.  Furthermore, note that $q$ is finite as
\[ q_t(x,y) = \sum_{\ow{y} \in \pi^{-1}(y)} \ow{p}_t(\ow{x},\ow{y}) = \frac{1}{m(y)} \sum_{\ow{y} \in \pi^{-1}(y)} \ow{p}_t(\ow{x},\ow{y}) \ow{m}(\ow{y}) 
\leq \frac{1}{m(y)}. \]

Next we will show that $q$ satisfies the heat equation on $G=(X,b,m)$.  In order to do so, we need to justify the interchange of the derivative and summation
via the use of the dominated convergence theorem.  Therefore, we need to bound the absolute value of the derivatives of $\ow{p}$ by a summable
function independently of $t$.  This is a general phenomenon and does not have to do with coverings so we present it as such and then apply it to $q$.

\begin{lemma}\label{l:uniform}
For every $T>0$ and $x \in X$, there exists $f_x:=f_{x,T} \in \ell^1(X,m)$ such that
\[  \max_{t \in [0,T]} p_t(x,y) \leq f_{x}(y) \]
for all $y \in X$.
\end{lemma}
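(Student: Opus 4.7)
The plan is to build $f_x$ explicitly as a constant multiple of $p_T(x,\cdot)$, exploiting the fact that this function is itself in $\ell^1(X,m)$ with norm at most one. For any $t \in [0,T]$, the semigroup identity gives
\[
p_T(x,y) = \sum_{z \in X} p_{T-t}(x,z)\, p_t(z,y)\, m(z),
\]
and retaining only the term $z=x$ yields the pointwise lower bound $p_T(x,y) \ge p_{T-t}(x,x)\, p_t(x,y)\, m(x)$. After rearranging, the whole task reduces to controlling $1/p_{T-t}(x,x)$ by a constant independent of $t \in [0,T]$.

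For this I would invoke the standard fact that $s \mapsto p_s(x,x)$ is non-increasing on $(0,\infty)$. A short computation using symmetry and the semigroup property shows $p_{2s}(x,x) = \|e^{-sL}\oh{1}_x\|_{\ell^2(X,m)}^2$, and differentiating in $s$ yields the non-positive derivative $-2\|L^{1/2} e^{-sL}\oh{1}_x\|^2$ from the self-adjointness and non-negativity of $L$. This is really the only spectral-theoretic input of the argument and is essentially where the main (minor) care is needed. From monotonicity it follows that $p_{T-t}(x,x) \ge p_T(x,x) > 0$ for every $t \in [0,T]$, with the strict positivity a consequence of the connectedness assumption.

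Putting the two observations together, the natural candidate is
\[
f_x(y) := \frac{p_T(x,y)}{p_T(x,x)\, m(x)},
\]
which dominates $p_t(x,y)$ uniformly in $t \in [0,T]$ by the chain of inequalities above. Its summability in $\ell^1(X,m)$ is then immediate from the general bound $\sum_{y \in X} p_T(x,y)\, m(y) \le 1$, which yields $\|f_x\|_{\ell^1(X,m)} \le 1/(p_T(x,x)\, m(x)) < \infty$. The argument is thus essentially a manipulation of the semigroup identity combined with a single monotonicity statement; no heat kernel estimate, intrinsic metric, or curvature condition enters, consistent with the authors' remark that the lemma is a general phenomenon unrelated to coverings.
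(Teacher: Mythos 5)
Your proof is correct, but it takes a genuinely different route from the paper's. The paper writes $p_t(x,y)=\oh{1}_x(y)+\int_0^t \partial_s p_s(x,y)\,ds$, bounds $|\partial_s p_s(x,y)|=|L_x p_s(x,y)|$ by degree-weighted heat kernels, and defines $f_x$ as the resulting integral expression; Fubini together with $\sum_y p_s(x,y)m(y)\le 1$ then gives the explicit bound $\|f_x\|_{\ell^1(X,m)}\le 1+2T\Deg(x)$. You instead dominate by the single function $f_x=p_T(x,\cdot)/(p_T(x,x)m(x))$, using the semigroup identity to get $p_t(x,y)\le p_T(x,y)/(p_{T-t}(x,x)m(x))$ and then the monotone decay of $s\mapsto p_s(x,x)$ (a consequence of the spectral theorem, as you note, or of $p_{2s}(x,x)=\|e^{-sL}\oh{1}_x\|^2$) to replace $p_{T-t}(x,x)$ by $p_T(x,x)>0$; the endpoint cases $t=0$ and $t=T$ are also covered since $p_0(x,x)=1/m(x)\ge p_T(x,x)$. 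Your dominating function is cleaner and the argument shorter, at the cost of invoking strict positivity of the heat kernel (connectedness) and a small spectral-theoretic input; the paper's version uses only the heat equation and sub-unitarity of the semigroup. It is worth noting that your argument is essentially the one the authors themselves use elsewhere, in the proof of Lemma~\ref{l:uniformity}, where the same semigroup trick yields $\max_{t\in[0,T]}p_t(x,y)\le e^{T\Deg(x)}p_T(x,y)$ with the subsolution bound $p_{T-t}(x,x)\ge e^{-(T-t)\Deg(x)}/m(x)$ playing the role of your monotonicity step; that inequality would serve equally well here.
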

\begin{proof}
Let $\oh{1}_x(y) = 1_x(y)/m(x)$ and note that $p_0(x,y) = \oh{1}_x(y)$.  As $p_t(x,y)$ satisfies
the heat equation in either variable,
\begin{align*}
p_t(x,y) &= \oh{1}_x(y) + \int_0^t \partial_s p_s(x,y) ds \\
&\leq \oh{1}_x(y) + \int_0^T |  \partial_s p_s(x,y)| ds \\
&=  \oh{1}_x(y) + \int_0^T | L_x p_s(x,y)| ds \\
&\leq  \oh{1}_x(y) + \int_0^T \left(\Deg(x) p_s(x,y) + \frac{1}{m(x)} \sum_{z \in X} b(x,z)p_s(z,y) \right) ds \\
&=: f_x(y). 
\end{align*}
Since this holds for all $t \in [0,T]$, it is clear that $\max_{t \in [0,T]} p_t(x,y) \leq f_x(y)$.

Now, by applying Fubini's Theorem and using that $\sum_{y \in X} p_s(x,y) m(y) \leq 1$, we get that
\begin{align*}
\sum_{y \in X} f_x(y)m(y) \leq 1 + 2T \Deg(x)
\end{align*}
so that $f_x \in \ell^1(X,m)$.
\end{proof}

We now use the lemma above to show that $q$ satisfies the heat equation on $G$.
\begin{lemma}\label{l:heat_equation}
 For all $x,y \in X$, $t\geq0$, we have that
\[ (L + \pt) q_t(x,y) = 0 \] 
where the Laplacian is applied in either variable.   Furthermore, 
\[ q_0(x,y) = p_0(x,y). \]
\end{lemma}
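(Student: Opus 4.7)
The plan is to establish the initial condition, the heat equation in the $y$ variable, and then the heat equation in the $x$ variable, in that order.

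For the initial condition, I would simply evaluate $q_0(x,y) = \sum_{\ow{y} \in \pi^{-1}(y)} \ow{p}_0(\ow{x},\ow{y}) = \sum_{\ow{y} \in \pi^{-1}(y)} \oh{1}_{\ow{x}}(\ow{y})$. If $y \neq x$, then $\ow{x} \notin \pi^{-1}(y)$, so every term vanishes; if $y=x$, then the sole nonzero contribution is $1/\ow{m}(\ow{x}) = 1/m(x)$. In either case $q_0(x,y) = \oh{1}_x(y) = p_0(x,y)$.

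For the heat equation in $y$, the main structural input is that $\pi$ is a graph isomorphism on the neighborhood of every $\ow{y}$ and the deck group $\Gamma$ acts freely and transitively on each fiber. Consequently, for each fixed $z \sim y$ the assignment $\ow{y} \mapsto \ow{z}(\ow{y},z)$, where $\ow{z}(\ow{y},z)$ is the unique neighbor of $\ow{y}$ in $\ow{X}$ projecting to $z$, is a bijection $\pi^{-1}(y) \to \pi^{-1}(z)$. Expanding
\[ \Lp_y q_t(x,y) = \frac{1}{m(y)} \sum_{z \sim y} b(y,z)\bigl( q_t(x,y) - q_t(x,z) \bigr), \]
substituting the definitions of $q_t(x,y)$ and $q_t(x,z)$, and reindexing the $z$-sum in the second term along this bijection, I can then interchange the finite sum over neighbors with the fiber sum to obtain
\[ \Lp_y q_t(x,y) = \sum_{\ow{y} \in \pi^{-1}(y)} \frac{1}{\ow{m}(\ow{y})} \sum_{\ow{z} \sim \ow{y}} \ow{b}(\ow{y},\ow{z}) \bigl( \ow{p}_t(\ow{x},\ow{y}) - \ow{p}_t(\ow{x},\ow{z}) \bigr) = \sum_{\ow{y} \in \pi^{-1}(y)} \ow{\Lp}_{\ow{y}} \ow{p}_t(\ow{x},\ow{y}). \]
To pass the time derivative through the fiber sum, I would apply Lemma~\ref{l:uniform} to the cover $\ow{G}$, producing $\ow{f}_{\ow{x}} \in \ell^1(\ow{X},\ow{m})$ with $\max_{s \in [0,T]} \ow{p}_s(\ow{x},\ow{y}) \leq \ow{f}_{\ow{x}}(\ow{y})$, and then bound $|\pt \ow{p}_s(\ow{x},\ow{y})| = |\ow{\Lp}_{\ow{y}} \ow{p}_s(\ow{x},\ow{y})|$ by a function summable over $\pi^{-1}(y)$ using the local isomorphism argument. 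A dominated convergence (or Fubini on $\int_0^t \pt \ow{p}_s \, ds$) then justifies $\pt q_t(x,y) = \sum_{\ow{y}} \pt \ow{p}_t(\ow{x},\ow{y})$, whence $(\Lp_y + \pt)q_t(x,y) = 0$.

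For the heat equation in the $x$ variable, rather than repeat the calculation I would exploit symmetry. The heat kernel on $\ow{G}$ satisfies $\ow{p}_t(\ow{u},\ow{v}) = \ow{p}_t(\ow{v},\ow{u})$, and it is also $\Gamma$-invariant as noted in the text. Parametrizing $\pi^{-1}(x)$ by $\gamma(\ow{x})$ for $\gamma \in \Gamma$ and using $\ow{p}_t(\gamma(\ow{y}), \ow{x}) = \ow{p}_t(\ow{y}, \gamma^{-1}(\ow{x}))$ together with the substitution $\gamma \mapsto \gamma^{-1}$, a short computation yields $q_t(x,y) = q_t(y,x)$. The heat equation in $x$ is then immediate from the heat equation in $y$.

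The main obstacle I anticipate is the justification of the two interchanges in step two, since the fiber sum can be infinite; everything else is essentially formal once the local-isomorphism bijection between fibers of adjacent base vertices is in hand, and once Lemma~\ref{l:uniform} is lifted to $\ow{G}$ to provide a dominating function independent of $t \in [0,T]$.
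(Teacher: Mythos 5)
Your proposal is correct and follows essentially the same route as the paper: the spatial part is handled by the covering's local-isomorphism structure, and the interchange of $\pt$ with the fiber sum is justified by lifting Lemma~\ref{l:uniform} to $\ow{G}$ and applying dominated convergence. The only cosmetic differences are that the paper dominates $|\pt\ow{p}_t|=|\ow{L}_{\ow{x}}\ow{p}_t|$ in the $\ow{x}$-variable (a finite sum of $\ell^1$ functions of $\ow{y}$, so fiber-summability is immediate), whereas you work in the $\ow{y}$-variable and then obtain the $x$-equation by symmetry, and the paper additionally records right-continuity of $q$ at $t=0$ via dominated convergence rather than only evaluating at $t=0$.
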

\begin{proof}
Recall that by the definition of the covering, we have that $(L + \pt) (\ow{p} \circ \pi^{-1})=0$ where $L$ is applied in either variable.  Therefore,
in order to show that $q$ satisfies the heat equation on $X$, it suffices to show that
\[ \pt \sum_{\ow{y} \in \pi^{-1}(y)} \ow{p}_t(\ow{x},\ow{y}) = \sum_{\ow{y} \in \pi^{-1}(y)} \pt \ow{p}_t(\ow{x},\ow{y}), \]
that is, that the summation and derivative commute.  For this, it suffices to observe that $\ow{p}$ is differentiable in $t$,
that, as noted above, $\sum_{\ow{y} \in \pi^{-1}(y)} \ow{p}_t(\ow{x},\ow{y}) \leq 1/m(y)$ for all $t$
and that for any $T>0$ and $t \in [0,T]$ with $f_{\ow{x}}$ as in Lemma~\ref{l:uniform}, we have
\begin{align*} 
| \pt \ow{p}_t(\ow{x}, \ow{y}) | &= | \ow{L}_{\ow{x}} \ow{p}_t(\ow{x},\ow{y}) |  \\
&\leq  \Deg(x) f_{\ow{x}}(\ow{y}) + \frac{1}{\ow{m}(\ow{x})} \sum_{\ow{z} \in \ow{X}} \ow{b}(\ow{x},\ow{z})f_{\ow{z}}(\ow{y}). 
\end{align*}
Since the upper bound is a finite sum of functions which are in $\ell^1(\ow{X}, \ow{m})$, with norm independent of $t$, and since 
\[ \sum_{\ow{y} \in \pi^{-1}(y)} |f(\ow{y}) | = \frac{1}{m(y)} \sum_{\ow{y} \in \pi^{-1}(y)} | f(\ow{y}) | \ow{m}(\ow{y})\]
for any $f \in \ell^1(\ow{X}, \ow{m})$ the first statement follows.

For the second statement, note that by applying Lemma~\ref{l:uniform} again we get that
\[ \lim_{t \to 0^+} \sum_{\ow{y} \in \pi^{-1}(y)} \ow{p}_t(\ow{x},\ow{y}) = \sum_{\ow{y} \in \pi^{-1}(y)} \ow{p}_0(\ow{x},\ow{y})
= \sum_{\ow{y} \in \pi^{-1}(y)} \oh{1}_{\ow{x}}(\ow{y}) = \oh{1}_x(y) = p_0(x,y). \]
\end{proof}

As $p_t(x,y)$ is the minimal non-negative solution to the heat equation on $G$ by \cite{KL12}, it follows that 
\[ p_t(x,y) \leq q_t(x,y). \]  
We now show that the opposite inequality is also true.  In order to do so, we note the following
summation formula.  If $\ow{D} \subseteq \ow{X}$, $D = \pi(\ow{D})$ and $\varphi \in C_c(X)$, then
\begin{equation}\label{e:summation}
 \sum_{y \in D} \left( \sum_{\ow{y} \in \pi^{-1}(y) \cap \ow{D}} \ow{p}_t(\ow{x},\ow{y}) \right)\varphi(y) m(y) = \sum_{\ow{y} \in \ow{D}}\ow{p}_t(\ow{x},\ow{y}) (\varphi \circ \pi)(\ow{y}) \ow{m}(\ow{y}),
\end{equation} 
which is a discrete version of co-area formula for the map $\pi.$

Now, we take any exhaustion sequence $\ow{D}_i$ of $\ow{X}$.  That is, $\ow{D}_i$ are finite, connected, increasing subsets of $\ow{X}$ such that
$\ow{X} = \cup_i \ow{D}_i$.  We let $\ow{p}^i$ denote the Dirichlet heat kernels on $\ow{D}_i$. These satisfy the heat equation on 
\[ \inte \ow{D}_i = \{ \ow{x} \in \ow{D}_i \ | \ \ow{y} \in \ow{D}_i \textup{ for all } \ow{y} \sim \ow{x} \}, \] 
the \emph{interior} of $\ow{D}_i$,
and vanish on the boundary $\partial \ow{D}_i = \ow{D}_i \setminus \inte \ow{D}_i.$ 
For convenience, one may extend $\ow{p}^i$ to the entire graph by setting $\ow{p}^i(\ow{x},\wt{y})=0$ whenever $\ow{x}$ or $\ow{y}\in \ow{X}\setminus \ow{D}_i$.  By maximum principle arguments it follows that $\ow{p}^i \to \ow{p}$ monotonically as $i \to \infty$,
see \cite{Woj08,KL12} for more details. 

We fix $x \in X,$ choose $\ow{x}\in \pi^{-1}(x)$ and  define
\[ q_t^i(y) = \sum_{\ow{y} \in \pi^{-1}(y)\cap \ow{D}_i} \ow{p}_t^i(\ow{x},\ow{y}) \]
and note that by the monotone convergence theorem $q_t^i(y) \to q_t(x,y)$ as $i \to \infty$.
We furthermore note that if $D_i = \pi(\ow{D}_i)$, then $q_t^i(y) = 0$ for $y \in \partial D_i$
and that $q_0^i(y) \leq p_0(x,y)$.  We now show that $q^i$ is a subsolution for the heat equation.
The following is adapted from \cite{Bor00}.

\begin{lemma}\label{l:heat_equation_2}
For $y \in D_i = \pi (\ow{D}_i)$ we have that
\[ (L + \pt) q_t^i(y) \leq 0. \]
\end{lemma}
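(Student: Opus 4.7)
The plan is to compute $L q_t^i(y)$ by lifting the sum to $\ow{X}$ via the covering map and then comparing to $\ow{L} \ow{p}_t^i$ applied in the first variable (keeping $\ow{x}$ fixed, we mean the second; but the kernel is symmetric under adjustment by the measures — more to the point, we use that $\ow{p}_t^i$ satisfies the heat equation in the variable $\ow{y}$ in the interior of $\ow{D}_i$). The key ingredient is that $\pi$ is a graph isomorphism on the neighborhood of each $\ow{y}$, so for every $z\sim y$ there is a unique neighbor $\ow{z}_{\ow{y}}\sim \ow{y}$ with $\pi(\ow{z}_{\ow{y}})=z$, and moreover $\ow{b}(\ow{y},\ow{z}_{\ow{y}})=b(y,z)$ and $\ow{m}(\ow{y})=m(y)$.

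First I would write, for each $\ow{y}\in \pi^{-1}(y)\cap \ow{D}_i$,
\[
\ow{\Lp}_{\ow{y}}\ow{p}^i_t(\ow{x},\ow{y}) \;=\; \frac{1}{m(y)}\sum_{z\sim y} b(y,z)\bigl(\ow{p}^i_t(\ow{x},\ow{y}) - \ow{p}^i_t(\ow{x},\ow{z}_{\ow{y}})\bigr),
\]
and then sum over $\ow{y}\in\pi^{-1}(y)\cap\ow{D}_i$. The resulting diagonal piece is exactly $\Deg(y)\,q_t^i(y)$, while the off-diagonal piece is $\tfrac{1}{m(y)}\sum_{z\sim y} b(y,z)\,S(z)$ where $S(z):=\sum_{\ow{y}\in\pi^{-1}(y)\cap\ow{D}_i}\ow{p}^i_t(\ow{x},\ow{z}_{\ow{y}})$. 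The map $\ow{y}\mapsto \ow{z}_{\ow{y}}$ is an injection from $\pi^{-1}(y)\cap \ow{D}_i$ into $\pi^{-1}(z)$ (since $\pi$ restricted to a neighborhood of $\ow{z}_{\ow{y}}$ is injective), and $\ow{p}^i_t(\ow{x},\cdot)$ vanishes outside $\ow{D}_i$ and is non-negative, so $S(z)\leq q_t^i(z)$. Comparing with the direct expression for $L q_t^i(y)$ then yields
\[
L q_t^i(y) \;\leq\; \sum_{\ow{y}\in \pi^{-1}(y)\cap \ow{D}_i}\ow{\Lp}_{\ow{y}}\ow{p}^i_t(\ow{x},\ow{y}).
\]

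Next I would split the right-hand side into contributions from the interior and boundary of $\ow{D}_i$. On $\inte \ow{D}_i$, $\ow{p}^i_t$ solves the heat equation, so $\ow{\Lp}_{\ow{y}}\ow{p}^i_t(\ow{x},\ow{y})=-\pt \ow{p}^i_t(\ow{x},\ow{y})$. On $\partial \ow{D}_i$, $\ow{p}^i_t(\ow{x},\ow{y})=0$, hence $\ow{\Lp}_{\ow{y}}\ow{p}^i_t(\ow{x},\ow{y}) = -\frac{1}{\ow{m}(\ow{y})}\sum_{\ow{z}\sim \ow{y}}\ow{b}(\ow{y},\ow{z})\ow{p}^i_t(\ow{x},\ow{z})\leq 0$ by non-negativity of the Dirichlet heat kernel. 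Since also $\pt \ow{p}^i_t(\ow{x},\ow{y})=0$ at boundary vertices, we get
\[
\sum_{\ow{y}\in \pi^{-1}(y)\cap \ow{D}_i}\ow{\Lp}_{\ow{y}}\ow{p}^i_t(\ow{x},\ow{y}) \;\leq\; -\pt \!\!\sum_{\ow{y}\in \pi^{-1}(y)\cap \ow{D}_i}\!\ow{p}^i_t(\ow{x},\ow{y}) \;=\; -\pt q_t^i(y),
\]
where interchanging $\pt$ with the finite sum is justified (the sum has only finitely many nonzero terms since $\ow{D}_i$ is finite). Combining the two displays gives $(L+\pt)q_t^i(y)\leq 0$.

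The main obstacle will be the boundary bookkeeping: one has to check that the injection $\ow{y}\mapsto \ow{z}_{\ow{y}}$ may fail to land in $\ow{D}_i$, and this is precisely what produces the inequality (rather than equality) in the first display. Once it is clear that missing vertices contribute $0$ to $S(z)$ while being counted with a non-negative value in $q_t^i(z)$, the inequality $S(z)\leq q_t^i(z)$ falls out and the rest of the argument is the standard observation that a Dirichlet heat kernel is a subsolution ``across the boundary''.
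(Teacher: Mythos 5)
Your proof is correct, and it reaches the conclusion by a genuinely more hands-on route than the paper. The paper's proof is a duality computation: it writes $Lq_t^i(y) = \as{Lq_t^i,\oh{1}_y}$, uses Green's formula to move the Laplacian onto $\oh{1}_y$, transfers the resulting sum to the cover via the summation identity \eqref{e:summation}, invokes the intertwining $\ow{\Lp}(f\circ\pi)=(\Lp f)\circ\pi$ and Green's formula once more to land the Laplacian on $\ow{p}_t^i$, and only then splits into interior and boundary contributions exactly as you do. Your version replaces this machinery with a direct pointwise computation built on the fact that $\pi$ is a graph isomorphism on each neighborhood: the diagonal terms reassemble into $\Deg(y)\,q_t^i(y)$ and the off-diagonal terms are controlled by the injectivity of the neighbor-lifting map $\ow{y}\mapsto\ow{z}_{\ow{y}}$ together with non-negativity of the Dirichlet kernel. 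This is more elementary and makes the source of the inequality completely explicit, at the cost of the fiber bookkeeping you flag at the end. One small remark: your inequality $S(z)\leq q_t^i(z)$ is in fact an equality. Any $\ow{z}\in\pi^{-1}(z)\cap\ow{D}_i$ that is \emph{not} of the form $\ow{z}_{\ow{y}}$ for some $\ow{y}\in\pi^{-1}(y)\cap\ow{D}_i$ must have its unique lifted $y$-neighbor outside $\ow{D}_i$, hence lies in $\partial\ow{D}_i$, where $\ow{p}_t^i$ vanishes; likewise the terms with $\ow{z}_{\ow{y}}\notin\ow{D}_i$ contribute $0$ to both sides. So, just as in the paper's argument, the only genuine source of the strict inequality is the boundary term $\ow{\Lp}_{\ow{y}}\ow{p}_t^i(\ow{x},\ow{y})\leq 0$ at vertices $\ow{y}\in\partial\ow{D}_i$. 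This observation does not affect correctness, but it slightly simplifies your ``main obstacle'' paragraph.
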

\begin{proof}
Let $\oh{1}_y = 1_y/m(y)$.  By Green's formula and using the summation equality (\ref{e:summation}) twice, we have that
\begin{align*}
Lq_t^i(y) &= \as{L q_t^i, \oh{1}_y} = \as{q_t^i, L \oh{1}_y} = \sum_{z \in D_i} q_t^i(z) L \oh{1}_y(z) m(z)\\
&= \sum_{z \in D_i} \left( \sum_{\ow{z} \in \pi^{-1}(z) \cap \ow{D}_i} \ow{p}_t^i(\ow{x},\ow{z}) \right) L \oh{1}_y(z) m(z) \\
&= \sum_{\ow{z} \in \ow{D}_i} \ow{p}_t^i(\ow{x},\ow{z}) (L \oh{1}_y \circ \pi)(\ow{z})\ow{m}(\ow{z}) \\
&= \sum_{\ow{z} \in \ow{D}_i} \ow{p}_t^i(\ow{x},\ow{z}) \ow{L}(\oh{1}_y \circ \pi)(\ow{z})\ow{m}(\ow{z}) \\
&= \sum_{\ow{z} \in \ow{D}_i} \ow{L_{\ow{z}}} \ow{p}_t^i(\ow{x},\ow{z}) (\oh{1}_y \circ \pi)(\ow{z})\ow{m}(\ow{z}) \\
&= \sum_{\ow{z} \in \inte \ow{D}_i} -\pt \ow{p}_t^i(\ow{x},\ow{z}) (\oh{1}_y \circ \pi)(\ow{z})\ow{m}(\ow{z}) \\
& \qquad -  \sum_{\ow{z} \in \partial \ow{D}_i} \sum_{\ow{w} \in \ow{D}_i} \ow{b}(\ow{z},\ow{w}) \ow{p}_t^i(\ow{x},\ow{w}) (\oh{1}_y \circ \pi)(\ow{z})\\
&\leq -\pt \sum_{\ow{z} \in \ow{D}_i} \ow{p}^i_t(\ow{x},\ow{z}) (\oh{1}_y \circ \pi)(\ow{z})\ow{m}(\ow{z}) \\
&=- \pt \sum_{z \in D_i}  q_t^i(z) \oh{1}_y(z)m(z) \\
&= - \pt q_t^i(y).
\end{align*}
\end{proof}

\begin{theorem}\label{t:equality}
Let $\ow{G}=(\ow{X},\ow{b},\ow{m})$ with heat kernel $\ow{p}$ be a regular covering of $G=(X,b,m)$ with heat kernel $p$.  
For all $t\geq0$, $x,y \in X$ and $\ow{x} \in \pi^{-1}(x)$, let $q_t(x,y) = \sum_{\ow{y} \in \pi^{-1}(y)} \ow{p}_t(\ow{x},\ow{y})$.
Then,
\[ q_t(x,y) = p_t(x,y). \]
\end{theorem}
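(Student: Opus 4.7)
The plan is to establish the two inequalities $p_t(x,y) \leq q_t(x,y)$ and $q_t(x,y) \leq p_t(x,y)$ separately. The first inequality is immediate: by Lemma~\ref{l:heat_equation}, $q_t(x,y)$ is a non-negative solution of the heat equation on $G$ with initial condition $q_0(x,\cdot) = \oh{1}_x = p_0(x,\cdot)$, and $p_t(x,y)$ is the minimal such solution, see \cite{KL12}. Thus $p_t(x,y) \leq q_t(x,y)$ for all $t \geq 0$ and $x,y \in X$.

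For the reverse inequality I would use the exhaustion $(\ow{D}_i)$ of $\ow{X}$ together with the Dirichlet heat kernels $\ow{p}^i$ on $\ow{D}_i$ and $p^i$ on $D_i = \pi(\ow{D}_i)$. The key observation is that $q^i_t(y) = \sum_{\ow{y} \in \pi^{-1}(y) \cap \ow{D}_i} \ow{p}^i_t(\ow{x},\ow{y})$ and $p^i_t(x,y)$ both satisfy zero Dirichlet boundary conditions on $\partial D_i$, while Lemma~\ref{l:heat_equation_2} shows $(L + \pt) q^i_t \leq 0$ on $D_i$ and of course $(L + \pt) p^i_t = 0$ on $\inte D_i$ with $p^i_t = 0$ on $\partial D_i$. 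Moreover, the initial data satisfy $q^i_0(y) \leq p_0(x,y) = p^i_0(x,y)$. Applying a parabolic maximum principle on the finite set $D_i$ to the difference $q^i_t - p^i_t(x,\cdot)$ then yields
\[ q^i_t(y) \leq p^i_t(x,y) \]
for all $y \in D_i$ and all $t \geq 0$.

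Finally, I would pass to the limit $i \to \infty$. By monotone convergence, $q^i_t(y) \to q_t(x,y)$ since $\ow{p}^i \to \ow{p}$ monotonically and the fibers over $y$ are eventually contained in any given $\ow{D}_i$-region. Likewise, $p^i_t(x,y) \to p_t(x,y)$ as noted in the excerpt. Combining these convergences with the inequality above gives $q_t(x,y) \leq p_t(x,y)$, which together with the first step yields the equality.

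The main obstacle I anticipate is the correct application of the parabolic maximum principle: $q^i_t$ is merely a subsolution and has the ``right'' boundary and initial behavior, so the comparison $q^i_t \leq p^i_t$ requires that the difference $u_t := q^i_t - p^i_t(x,\cdot)$ satisfies $(L + \pt) u_t \leq 0$ on $\inte D_i$, $u_t \leq 0$ on $\partial D_i$, and $u_0 \leq 0$; a standard argument examining a potential first time and interior vertex at which $u_t$ becomes positive then gives a contradiction. The rest is bookkeeping with Fubini and monotone convergence justified by Lemma~\ref{l:uniform} and the summation formula \eqref{e:summation}.
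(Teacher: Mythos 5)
Your proposal is correct and follows essentially the same route as the paper: $p_t \leq q_t$ via Lemma~\ref{l:heat_equation} and minimality of $p_t$, and $q_t \leq p_t$ via Lemma~\ref{l:heat_equation_2}, a parabolic maximum principle on the exhaustion, and passage to the limit. The only cosmetic difference is that you compare $q^i_t$ with the Dirichlet kernel $p^i_t$ on the finite set $D_i$ and then let $i \to \infty$, whereas the paper compares $q^i_t$ directly with $p_t(x,\cdot)$ using Proposition~2.2 of \cite{Woj17}; since $p^i_t \leq p_t$, both give the same bound.
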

\begin{proof}
As $q_t(x,y)$ satisfies the heat equation on $X$ with initial condition $p_0(x,y)$ by Lemma~\ref{l:heat_equation}, 
it follows that $p_t(x,y) \leq q_t(x,y)$ as $p_t(x,y)$ is the minimal non-negative solution.  
On the other hand, by Lemma~\ref{l:heat_equation_2} and using
a maximum principle as, for example Proposition~2.2 in \cite{Woj17}, it follows that $q_t^i(y) \leq p_t(x,y)$.
Since $q_t^i(y) \to q_t(x,y)$ as $i \to \infty$ it follows that $q_t(x,y) \leq p_t(x,y)$ and the conclusion follows.
\end{proof}

We note from the above that it is always true that $\ow{p}_t(\ow{x},\ow{y}) \leq p_t(x,y)$ for all $t \geq 0$, $\ow{x} \in \pi^{-1}(x)$ and 
$\ow{y} \in \pi^{-1}(y)$.  In the case of finitely many sheets, we get that the other inequality holds as well on the diagonal up to a multiple
of the number of sheets.

\begin{lemma}\label{l:heatKernelFiniteSheets}
Let $\ow{G}=(\ow{X},\ow{b},\ow{m})$ be a regular covering of $G=(X,b,m)$ with $n$ sheets. Then,
\[
p_t(x,x) \leq n \cdot \ow{p}_t (\ow{x},\ow{x})
\]
for all $\ow x \in \ow X$ and $x=\pi(\ow x)$.
\end{lemma}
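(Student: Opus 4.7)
The plan is to leverage Theorem~\ref{t:equality}, which already identifies $p_t(x,x)$ with a sum of $n$ heat kernel values on the cover, and then control each off-diagonal term by the diagonal term $\ow{p}_t(\ow{x},\ow{x})$ using Cauchy--Schwarz together with the deck transformation invariance of $\ow{p}_t$.

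More precisely, I would first apply Theorem~\ref{t:equality} at $y=x$ to obtain
\[
p_t(x,x) \;=\; \sum_{\ow{y}\in \pi^{-1}(x)} \ow{p}_t(\ow{x},\ow{y}),
\]
where the sum has exactly $n$ terms since the fiber $\pi^{-1}(x)$ has cardinality $n$. The task then reduces to bounding each term $\ow{p}_t(\ow{x},\ow{y})$ by $\ow{p}_t(\ow{x},\ow{x})$.

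For this, I would use the standard heat kernel Cauchy--Schwarz inequality on $\ow{G}$: by the semigroup property and symmetry of $\ow{p}$,
\[
\ow{p}_{2s}(\ow{x},\ow{y}) \;=\; \sum_{\ow{z}\in \ow{X}} \ow{p}_s(\ow{x},\ow{z})\,\ow{p}_s(\ow{y},\ow{z})\,\ow{m}(\ow{z}) \;\leq\; \sqrt{\ow{p}_{2s}(\ow{x},\ow{x})\,\ow{p}_{2s}(\ow{y},\ow{y})},
\]
so, setting $t=2s$, we get $\ow{p}_t(\ow{x},\ow{y}) \leq \sqrt{\ow{p}_t(\ow{x},\ow{x})\,\ow{p}_t(\ow{y},\ow{y})}$ for all $t>0$. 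Now, when $\ow{y}\in \pi^{-1}(x)$, regularity of the covering yields a deck transformation $\gamma\in\Gamma$ with $\gamma(\ow{x})=\ow{y}$, and the invariance $\ow{p}_t(\ow{x},\ow{x}) = \ow{p}_t(\gamma(\ow{x}),\gamma(\ow{x})) = \ow{p}_t(\ow{y},\ow{y})$ collapses the Cauchy--Schwarz bound to $\ow{p}_t(\ow{x},\ow{y}) \leq \ow{p}_t(\ow{x},\ow{x})$.

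Summing these $n$ estimates gives exactly $p_t(x,x) \leq n\cdot \ow{p}_t(\ow{x},\ow{x})$. There is no real obstacle here; the only point that deserves care is the verification that Cauchy--Schwarz applies, i.e.\ that $\ow{p}_t(\ow{x},\cdot)\in\ell^2(\ow{X},\ow{m})$, which is immediate from $\|\ow{p}_t(\ow{x},\cdot)\|_{\ell^2(\ow{X},\ow{m})}^2 = \ow{p}_{2t}(\ow{x},\ow{x}) < \infty$, so the argument is clean and essentially formal once Theorem~\ref{t:equality} is in hand.
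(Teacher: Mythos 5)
Your proof is correct and follows essentially the same route as the paper: both reduce to Theorem~\ref{t:equality} at $y=x$ and then bound each off-diagonal term $\ow{p}_t(\ow{x},\ow{y})$ by $\ow{p}_t(\ow{x},\ow{x})$ using the deck-transformation invariance of the diagonal. The only difference is the mechanism for that last bound: you use the semigroup identity plus Cauchy--Schwarz to get $\ow{p}_t(\ow{x},\ow{y})\le\sqrt{\ow{p}_t(\ow{x},\ow{x})\,\ow{p}_t(\ow{y},\ow{y})}$, whereas the paper expands $\as{e^{-t\ow{L}}(\oh{1}_{\ow{x}}-\oh{1}_{\ow{y}}),\oh{1}_{\ow{x}}-\oh{1}_{\ow{y}}}\ge 0$ to get $2\ow{p}_t(\ow{x},\ow{y})\le \ow{p}_t(\ow{x},\ow{x})+\ow{p}_t(\ow{y},\ow{y})$; with the diagonal equality these are interchangeable.
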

\begin{proof}
We first note that if $\ow{x}_1,\ow{x}_2 \in \pi^{-1}(x)$ for $x \in X$ and if $\gm \in \Gm$ is a deck transformation such that $\gm(\ow{x}_1)=\ow{x}_2$, then
\[ \ow{p}_t(\ow{x}_1,\ow{x}_1) = \ow{p}_t(\gm(\ow{x}_1),\gm(\ow{x}_1)) = \ow{p}_t(\ow{x}_2,\ow{x}_2). \]
Furthermore, as the semigroup is a positive operator, letting $\oh{1}_x = 1_x / m(x)$ we immediately get that
\[ 0 \leq \as{e^{-t \ow{L}}(\oh{1}_{\ow{x}_1} - \oh{1}_{\ow{x}_2}), \oh{1}_{\ow{x}_1} - \oh{1}_{\ow{x}_2} }= \ow{p}_t(\ow{x}_1,\ow{x}_1) + \ow{p}_t(\ow{x}_2,\ow{x}_2) - 2\ow{p}_t(\ow{x}_1, \ow{x_2})\]
so that $\ow{p}_t(\ow{x}_1,\ow{x}_2) \leq \ow{p}_t(\ow{x}_1, \ow{x_1})$.
Therefore, if $\pi^{-1}(x) = \{ \ow{x}_i \}_{i=1}^n$, then by Theorem~\ref{t:equality} we get that
\[ 
 p_t(x,x)  =  \sum_{i=1}^n \ow{p}_t(\ow{x}_1,\ow{x}_i)  \leq n \cdot \ow{p}_t(\ow{x}_1,\ow{x}_1).   \]
\end{proof}

\subsection{Main result}
The equality $q_t(x,y) = p_t(x,y)$ gives our main results on covering of graphs and the heat equation as presented below.  We note that for (ii) and (iii)
the other implication does not hold for the case of coverings with infinite sheets as we will show by example below.
\begin{theorem}\label{t:covering}
Let $\ow{G}= (\ow{X},\ow{b},\ow{m})$ be a regular covering of $G=(X,b,m)$.
\begin{itemize}
\item[(i)]  $\ow{G}$ satisfies (SI) if and only if $G$ satisfies (SI).
\item[(ii)] If $G$ satisfies (FP), then $\ow{G}$ satisfies (FP).
\item[(iii)] If $G$ satisfies (UT), then $\ow{G}$ satisfies (UT).
\end{itemize}
Furthermore, \textup{(ii)} and \textup{(iii)} become equivalences when the regular covering has finitely many sheets.
\end{theorem}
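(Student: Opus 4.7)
The whole theorem is driven by the identity $p_t(x,y)=\sum_{\ow{y}\in \pi^{-1}(y)} \ow{p}_t(\ow{x},\ow{y})$ from Theorem~\ref{t:equality}, so my plan is to reduce each of (i)--(iii) to manipulations of that nonnegative sum. For (i), I would fix $x\in X$, pick any $\ow{x}\in \pi^{-1}(x)$, and apply Tonelli plus the defining equality $\ow{m}(\ow{y})=m(\pi(\ow{y}))$ to obtain
\[
\sum_{y\in X} p_t(x,y)\,m(y) \;=\; \sum_{y\in X}\sum_{\ow{y}\in \pi^{-1}(y)}\ow{p}_t(\ow{x},\ow{y})\,m(y) \;=\; \sum_{\ow{y}\in \ow{X}} \ow{p}_t(\ow{x},\ow{y})\,\ow{m}(\ow{y}).
\]
Since the two expressions agree, one is strictly less than $1$ iff the other is, which gives the (SI) equivalence.

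For (iii), I would just integrate the trivial consequence $\ow{p}_t(\ow{x},\ow{x})\le p_t(x,x)$ of Theorem~\ref{t:equality} in $t$ to get $\ow{g}(\ow{x},\ow{x})\le g(x,x)$, so a uniform bound $g(\,\cdot\,,\cdot\,)\le C$ on the base passes to the cover unchanged. For (ii), the same pointwise bound $\ow{p}_t(\ow{x},\ow{y})\le p_t(\pi(\ow{x}),\pi(\ow{y}))$ is the main tool; the twist is that $\ow{y}_n\to\infty$ in $\ow{X}$ need not imply $\pi(\ow{y}_n)\to\infty$ in $X$ once the covering is infinite-sheeted. I would argue by contradiction: if $\ow{p}_t(\ow{x},\ow{y}_n)\ge\varepsilon$ along a subsequence, then either $\pi(\ow{y}_n)\to\infty$ in $X$ along a further subsequence, contradicting (FP) on the base via $\ow{p}_t(\ow{x},\ow{y}_n)\le p_t(x,\pi(\ow{y}_n))$, or $\pi(\ow{y}_n)$ stays in a finite set and hence stabilises on some fixed $y_0$ after passing to a sub-subsequence; in that case the $\ow{y}_n$ are eventually distinct elements of the fiber $\pi^{-1}(y_0)$, and since the positive series $\sum_{\ow{y}\in \pi^{-1}(y_0)}\ow{p}_t(\ow{x},\ow{y})=p_t(x,y_0)$ converges, its terms must tend to zero along any injective enumeration, a contradiction.

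For the converses in (ii) and (iii) under the assumption of $n<\infty$ sheets, finiteness of the fibers does the work. For (ii): if $y_n\to\infty$ in $X$ and I choose any lift $\ow{y}_n\in \pi^{-1}(y_n)$, then $\ow{y}_n\to\infty$ in $\ow{X}$ (otherwise $\pi(\ow{y}_n)$ would lie in the projection of a finite set, hence remain bounded); so (FP) on $\ow{G}$ forces each of the at most $n$ summands in $p_t(x,y_n)=\sum_{\ow{y}\in \pi^{-1}(y_n)}\ow{p}_t(\ow{x},\ow{y})$ to tend to zero, and the finite sum does too. For (iii): I would quote Lemma~\ref{l:heatKernelFiniteSheets} to get $p_t(x,x)\le n\,\ow{p}_t(\ow{x},\ow{x})$, integrate in $t$, and conclude $g(x,x)\le n\,\ow{g}(\ow{x},\ow{x})\le nC$.

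The only genuine obstacle is the infinite-sheets case of (ii): one must simultaneously handle escape of $\ow{y}_n$ through the base direction and escape within a single fiber, which is exactly where the case split together with the tail-of-a-convergent-series observation is needed. The other steps are bookkeeping on top of Theorem~\ref{t:equality} and Lemma~\ref{l:heatKernelFiniteSheets}.
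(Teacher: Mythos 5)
Your proposal is correct and follows essentially the same route as the paper: part (i) via the Tonelli rearrangement of the fiber sum, part (ii) via the case split between escape in the base (handled by $\ow{p}_t\le p_t$ and (FP) on $G$) and escape within a single fiber (handled by the tail of the convergent series $\sum_{\ow{y}\in\pi^{-1}(y_0)}\ow{p}_t(\ow{x},\ow{y})=p_t(x,y_0)$), and part (iii) plus both finite-sheet converses via integration of the pointwise heat kernel comparisons, including Lemma~\ref{l:heatKernelFiniteSheets}. The only cosmetic difference is that you phrase the infinite-sheeted case of (ii) as a contradiction argument rather than as ``every sequence has a null subsequence,'' which is logically the same.
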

\begin{proof}
For (i), note that by Theorem~\ref{t:equality}
\begin{align*}
\sum_{y \in X} p_t(x,y)m(y) &= \sum_{y \in X} q_t(x,y) m(y) \\
&= \sum_{y \in X}  \sum_{\ow{y} \in \pi^{-1}(y)} \ow{p}_t(\ow{x},\ow{y}) m(y) = \sum_{\ow{y} \in \ow{X}}\ow{p}_t(\ow{x},\ow{y}) \ow{m}(\ow{y})
\end{align*}
from which the conclusion follows immediately.

For (ii), note that Theorem~\ref{t:equality} implies that $\ow{p}_t(\ow{x},\ow{y}) \leq p_t(x,y)$ for all $t\geq 0$ and $\ow{x} \in \pi^{-1}(x), \ow{y} \in \pi^{-1}(y)$.  
Fix some $\ow{x}\in \ow{X}$ and $t>0.$ It suffices to show that for any sequence $\ow{y}_n \to \infty$ in $\ow{X},$ there is a subsequence $\ow{y}_{n_k}$ such that 
\[ \ow{p}_t(\ow{x},\ow{y}_{n_k}) \longrightarrow 0, \quad \mathrm{as}\ k \to \infty. \]
Let $y_n = \pi(\ow{y}_n).$ If $y_n \to \infty$ in $X$, then the result is clear as we assume that $G$ satisfies (FP). If not, one can extract a subsequence, still denoted by $y_n,$ such that $\{ y_n \}_{n=1}^\infty \subseteq D$ for some finite set $D$ in $X.$ We may further choose a subsequence $y_{n_k}$ of $y_n$ such that $\ow{y}_{n_k}$ are contained in the same fiber. It follows
that $\ow{p}_t(\ow{x},\ow{y}_{n_k}) \to 0$ as $k \to \infty$ as these terms are the tail of a convergent series $q_t(x, y_{n_k})$.  Therefore, the conclusion of (ii) follows.

In the case of finitely many sheets, if $\ow{G}$ satisfies (FP) and $y_n \in X$ satisfies $y_n \to \infty$, then for any choice $\ow{y}_n \in \pi^{-1}(y_n)$
it follows that $\ow{y}_n \to \infty$ in $\ow{X}$.  As $\ow{G}$ satisfies (FP), it follows that $\ow{p}_t(\ow{x},\ow{y}_n) \to 0$ as $\ow{y}_n \to \infty$.  Therefore,
by Theorem~\ref{t:equality},
\[ p_t(x,y_n) = q_t(x,y_n) = \sum_{\ow{y}_n \in \pi^{-1}(y_n)} \ow{p}_t(\ow{x},\ow{y}_n) \longrightarrow 0 \]
as $y_n \to \infty$ since the number of terms in the sum above is always equal to the number of sheets.

For (iii), it is clear from the fact that $\ow{p}_t(\ow{x},\ow{y}) \leq p_t(x,y)$ that 
\[ g(x,x) = \int_0^\infty p_t(x,x) dt \geq \int_0^\infty \ow{p}_t(\ow{x},\ow{x}) dt = \ow{g}(\ow{x},\ow{x}) \]
from which the conclusion follows immediately.

Now, assume that the covering has $n$ sheets and that  $\ow{G}$ satisfies (UT) so that $\ow{g}(\ow{x},\ow{x}) \leq C$ for all $\ow{x} \in \ow{X}$.
Letting $x \in X$ and $\ow x \in \pi^{-1}(x)$, by Lemma~\ref{l:heatKernelFiniteSheets} we get that
\[ g(x,x) = \int_0^\infty p_t(x,x) dt \leq \int_0^\infty n \cdot \ow{p}_t(\ow{x},\ow{x}) dt \leq n \cdot \ow{g}(\ow{x},\ow{x}) \leq nC  \]
so that $G$ satisfies (UT).
\end{proof}

The statement of the equality of (FP) on $G$ and $\ow{G}$ in the case of finitely many sheets gives an analogy to Proposition~8.1 in \cite{PS12}. 
However, neither (FP) nor (UT) become equality for the case of infinitely many sheets as the following example shows.
\begin{example}[$\ow{G}$ (UT) $\not \Longrightarrow G$ (FP)]\label{e:converse}
Let 
\[ G = \Z_{1,m} \times C_3 \times C_3\] 
where $C_3$ are ordinary 3-cycles with standard edge weight and measure and
 $\Z_{1,m} = (\Z, b, m)$ with $b(x,y) = 1$ if $|x-y|=1$ and 0 otherwise and
$m(x)=m(-x)=m(r)$ if $|x|=r$ where $m$ satisfies
$ \sum_{r=0}^\infty \sum_{k=r+1}^\infty m(k) <\infty. $  It follows by Theorem~4.13 in \cite{Woj17} that $\Z_{1,m}$ is not Feller.
As such, by Theorem~3.3 in \cite{Woj17}, there exists a function $v \not \in C_0(\Z)$ with $v(0)=1$ and such that $\Lp_{\Z_{1,m}} v(z) = -v(z)$ for all $z \not = 0$. 
This function can be easily extended to $G$ by letting $w(z, x_1, x_2) = v(z)$ for all vertices $(z,x_1,x_2)$ in $G$.  
It follows that $w$ satisfies $\Lp w = -w$ away from $(0,x_1,x_2)$, $w(0,x_1,x_2)=1$ and $w \not \in C_0(X)$.  As such, $G$ does not satisfy (FP) and, 
consequently, does not satisfy (UT).

We now let 
\[\ow{G} = \Z_{1,m} \times \Z \times \Z\] 
where $\Z$ is the ordinary integer lattice with standard edge weight and measure.  As $\Z$ is the universal cover of $C_3$, we
let $\pi_i: \Z \to C_3$ denote the covering maps.  Then, $\pi: \ow{X} \to X$ given by $\pi(z,\ow{x}_1,\ow{x}_2) = (z, \pi_1(\ow{x}_1), \pi_2(\ow{x}_2))$ is a regular
covering.  Topologically, $\ow{G}$ is homeomorphic to $\Z^3$, the standard 3-dimensional integer lattice, and all such graphs are uniformly transient,
independently of the measure, see Corollary~2.6 in \cite{KLSW17}.  Thus, $\ow{G}$ satisfies (UT) and (FP) while $G$ satisfies neither.
\end{example}

\subsection{Spectral applications}
We now give some spectral applications of Theorem~\ref{t:equality}.  In particular, we look at the bottom of the spectrum of the Laplacian which is given by
\[ \lm_0(L) = \inf_{\varphi \in C_c(X), \varphi \not = 0} \frac{\as{L\varphi, \varphi}}{\| \varphi \|^2} \]
and give some connections between the bottom of the spectrum of a graph and its cover.  We also investigate the heat kernel decay
in this setting.

\bigskip

By applying an analogue to a theorem of Li \cite{Li86}, proven in the graph setting in \cite{HKLW12, KLVW15}, we  have the following result concerning
$\lm_0(L)$.
\begin{corollary}
Let $\ow{G}=(\ow{X},\ow{b},\ow{m})$ be a regular covering of $G=(X,b,m)$.  Let $\lm_0(L)$ 
and $\lm_0(\ow{L})$ denote the bottom of
the spectrum of the Laplacian on $G$ and $\ow{G}$, respectively.  Then,
\[ \lm_0(\ow{L}) \geq \lm_0(L). \]
Furthermore, we have equality if the covering has finitely many sheets.
\end{corollary}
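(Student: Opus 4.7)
The plan is to invoke the graph analogue of Li's theorem (as proven in \cite{HKLW12, KLVW15}) which identifies the bottom of the spectrum with the exponential decay rate of the heat kernel, namely
\[
\lm_0(L) = - \lim_{t \to \infty} \frac{\log p_t(x,y)}{t}
\]
for any $x,y \in X$, and likewise for $\ow{L}$ on $\ow{G}$. Once this identity is in hand, both statements reduce to pointwise comparisons of the heat kernels $p_t$ and $\ow{p}_t$ on the diagonal, which we already have from Section~\ref{s:coverings}.

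For the first inequality, I would fix $x \in X$ and $\ow{x} \in \pi^{-1}(x)$. Theorem~\ref{t:equality} gives
\[
p_t(x,x) = \sum_{\ow{y} \in \pi^{-1}(x)} \ow{p}_t(\ow{x},\ow{y}) \geq \ow{p}_t(\ow{x},\ow{x}),
\]
since every term in the sum is non-negative. Taking $-\log/t$ and letting $t \to \infty$ (and using Li's formula above) yields
\[
\lm_0(L) = -\lim_{t \to \infty} \frac{\log p_t(x,x)}{t} \leq -\lim_{t \to \infty} \frac{\log \ow{p}_t(\ow{x},\ow{x})}{t} = \lm_0(\ow{L}),
\]
which is precisely the desired inequality.

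For the case of finitely many sheets, Lemma~\ref{l:heatKernelFiniteSheets} provides the complementary bound $p_t(x,x) \leq n \cdot \ow{p}_t(\ow{x},\ow{x})$, where $n$ is the number of sheets. Taking $-\log/t$ of both sides and letting $t \to \infty$, the constant $\log n$ contributes $0$ in the limit, so we obtain $\lm_0(L) \geq \lm_0(\ow{L})$, which combined with the first part gives equality. The main subtlety, and the only part that is not immediate from the inequalities above, is checking that the hypotheses of the graph version of Li's theorem from \cite{HKLW12, KLVW15} apply in our setting; but since our graphs are connected and locally finite with no killing term, this is essentially built into the framework established in Section~\ref{s:setting}, so no extra work is needed.
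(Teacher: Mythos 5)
Your proposal is correct and follows essentially the same route as the paper: it invokes the identity $\lim_{t\to\infty}\frac{\ln p_t(x,y)}{t}=-\lm_0(L)$ from \cite{HKLW12, KLVW15}, derives the inequality from the pointwise bound $\ow{p}_t(\ow{x},\ow{x})\leq p_t(x,x)$ given by Theorem~\ref{t:equality}, and obtains the reverse inequality in the finite-sheet case from Lemma~\ref{l:heatKernelFiniteSheets}, with the factor $n$ vanishing in the limit. No gaps.
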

\begin{proof}
The heat kernel and the bottom of the spectrum are connected as follows:
\[ \lim_{t \to \infty} \frac{\ln p_t(x,y)}{t} = -\lm_0(L) \]
for any $x,y \in X$, see \cite{HKLW12, KLVW15}.  The result is then immediate since $\ow{p}_t(\ow{x},\ow{y}) \leq p_t(x,y)$
which follows from Theorem~\ref{t:equality}.
In case of finitely many sheets, Lemma~\ref{l:heatKernelFiniteSheets} yields that $p_t(x,x) \leq n \cdot \ow{p}_t (\ow{x},\ow{x})$ where $n$ is the number of sheets. This immediately gives $\lm_0(\ow{L}) \leq \lm_0(L)$ as desired.
\end{proof}

We also have the following analogue to a result of Chavel/Karp, see Corollary~3 in \cite{CK91}.
\begin{corollary}
Let $\ow{G}=(\ow{X},\ow{b},\ow{m})$ be a regular covering of $G=(X,b,m)$.  
Let $\lm_0(\ow{L})$ denote the bottom of the spectrum of the Laplacian on $\ow{G}$.
If the number of sheets of the covering is infinite, then
\[ \lim_{t \to \infty} e^{t\lm_0(\ow{L})}\ow{p}_t(\ow{x},\ow{y}) =0. \]
\end{corollary}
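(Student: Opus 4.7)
The plan is to reduce the statement to a spectral fact about $\ow{L}$ and then exploit the $\Gamma$-equivariance forced by the infinite covering. First, I would express the heat kernel as the matrix element $\ow{p}_t(\ow{x}, \ow{y}) = \as{e^{-t\ow{L}}\oh{1}_{\ow{y}}, \oh{1}_{\ow{x}}}$ and apply the spectral theorem to the self-adjoint, non-negative operator $\ow{L} - \lm_0(\ow{L}) I$. Dominated convergence in the spectral measure shows that $e^{-t(\ow{L} - \lm_0(\ow{L}) I)}$ converges strongly, as $t \to \infty$, to the orthogonal projection $P_0$ onto $\ker(\ow{L} - \lm_0(\ow{L}) I)$. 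Consequently,
\begin{equation*}
\lim_{t \to \infty} e^{t\lm_0(\ow{L})}\,\ow{p}_t(\ow{x}, \ow{y}) = \as{P_0 \oh{1}_{\ow{y}}, \oh{1}_{\ow{x}}},
\end{equation*}
and the problem reduces to showing that $P_0 = 0$, i.e., that $\lm_0(\ow{L})$ is not an $\ell^2$-eigenvalue of $\ow{L}$.

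Next, I would argue by contradiction: suppose some $\phi \in \ell^2(\ow{X}, \ow{m}) \setminus \{0\}$ satisfies $\ow{L}\phi = \lm_0(\ow{L})\phi$. Using $(|a|-|b|)^2 \leq (a-b)^2$, the Dirichlet energy obeys $Q(|\phi|) \leq Q(\phi)$ while $\aV{|\phi|} = \aV{\phi}$, so $|\phi|$ also realizes the Rayleigh infimum and is therefore itself an eigenfunction at $\lm_0(\ow{L})$. Since $\ow{G}$ is connected, $\ow{p}_t > 0$ pointwise (as recorded in Section~\ref{s:setting}), and hence $e^{-t\ow{L}}$ is positivity improving on $\ell^2(\ow{X},\ow{m})$. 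The standard Perron--Frobenius argument for positivity-improving semigroups (e.g.\ Theorem~XIII.43 in Reed--Simon, Vol.~IV) then implies that the eigenspace at $\lm_0(\ow{L})$ is one-dimensional and spanned by a strictly positive function $\psi \in \ell^2(\ow{X}, \ow{m})$.

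The final step uses the regularity of the covering. For each $\gamma \in \Gamma$, $\psi \circ \gamma$ is again a strictly positive $\ell^2$-eigenfunction at $\lm_0(\ow{L})$ (since $\gamma$ is a graph isomorphism preserving $\ow{b}$ and $\ow{m}$), with $\aV{\psi \circ \gamma} = \aV{\psi}$ by the $\Gamma$-invariance of $\ow{m}$, so uniqueness forces $\psi \circ \gamma = \psi$ for every $\gamma$. Thus $\psi$ is constant on each fiber of $\pi$ and descends to a strictly positive function $f$ on $X$ with $\psi = f \circ \pi$. Consequently,
\begin{equation*}
\aV{\psi}^2 = \sum_{x \in X} f(x)^2\, m(x)\, |\pi^{-1}(x)| = \infty,
\end{equation*}
since the covering has infinitely many sheets, contradicting $\psi \in \ell^2(\ow{X}, \ow{m})$. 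Hence $P_0 = 0$ and the limit vanishes for all $\ow{x}, \ow{y} \in \ow{X}$.

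I expect the main obstacle to be the Perron--Frobenius uniqueness of the positive ground state: it is the one ingredient that requires spectral input beyond what is developed earlier in the paper. Once it is granted, the $\Gamma$-equivariant descent and the infinite-fiber obstruction to $\ell^2$-integrability make the contradiction essentially automatic.
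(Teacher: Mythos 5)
Your proof is correct and follows the same structural skeleton as the paper's: identify the limit with the ground-state projection, invoke uniqueness and positivity of the ground state, use deck-transformation invariance to force the eigenfunction to be constant on fibers, and contradict square-summability via the infinitely many sheets. The differences are worth noting, as your route is somewhat more self-contained. Where the paper cites the large-time heat kernel asymptotics of \cite{HKLW12, KLVW15} for the existence and identification of the limit, and then splits into the cases $\lm_0(\ow{L})=0$ (handled by a separate cited corollary using $\ow{m}(\ow{X})=\infty$) and $\lm_0(\ow{L})>0$, your spectral-theorem argument --- strong convergence of $e^{-t(\ow{L}-\lm_0(\ow{L}))}$ to the eigenprojection $P_0$ --- treats both cases uniformly and reduces everything to showing $\lm_0(\ow{L})$ is not an $\ell^2$-eigenvalue. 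You also make the Perron--Frobenius input explicit (positivity-improving semigroup via $\ow{p}_t>0$ on a connected graph, plus the Dirichlet-form inequality $Q(|\phi|)\leq Q(\phi)$), whereas the paper cites it. Finally, your derivation of $\psi\circ\gamma=\psi$ from unitarity of $\gamma^*$ together with strict positivity is cleaner than the paper's computation with the multipliers $\al(\gm)$ and the sum $\sum_{\gm}\al(\gm)^2$; both arrive at the same conclusion that $\psi$ descends to the base and cannot be in $\ell^2(\ow{X},\ow{m})$. I see no gaps.
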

\begin{proof}
It is always true that the limit above exists, see \cite{HKLW12, KLVW15}.  Now, if $\lm_0(\ow{L})=0$, then $\lim_{t \to \infty} \ow{p}_t(\ow{x},\ow{y})=0$ by Corollary~8.2 in \cite{HKLW12}
as $\ow{m}(\ow{X})=\infty$ since the number of sheets is infinite.  

If $\lm_0(\ow{L})>0$ 
and the limit above is positive, then there exists a positive, normalized
eigenfunction $\phi$ to $\lm_0(\ow{L})$ in $\ell^2(\ow{X},\ow{m})$ and it turns out in this case that the eigenspace of $\lm_0(\ow{L})$
is one-dimensional, see \cite{Sul87, HKLW12, KLVW15}.  

By an easy calculation using the invariance under the deck
transformation group $\Gm$, it follows that $\gm^* \phi$ would also be an eigenfunction for $\lm_0(\ow{L})$ in $\ell^2(\ow{X},\ow{m})$
for any $\gm \in \Gm$ where $(\gm^* \phi)(\ow{x}) = \phi(\gm(\ow{x}))$.
As the eigenspace of $\lm_0(\ow{L})$ is one-dimensional, it follows that there exists $\al:\Gamma \to \R_+$ such that
$\gamma^* \phi= \al(\gamma) \phi$ for all $\gamma \in \Gamma$.
Therefore,
\begin{align*}
\| \phi \|^2 &= \sum_{x \in X} \sum_{\ow{x} \in \pi^{-1}(x)} \phi(\ow{x})^2 \ow{m}(\ow{x}) \\
&= \sum_{x \in X}m(x) \sum_{\gm \in \Gm} (\gm^* \phi)^2(\ow{x}) \\
&= \sum_{x \in X}m(x) \phi^2(\ow{x}) \sum_{\gm \in \Gm} (\al(\gm))^2. \\
\end{align*}
As this is independent of the choice of $\ow{x} \in \pi^{-1}(x)$, it follows that
\[ \phi(\ow{x}_1) = \phi(\ow{x}_2) \]
for all $\ow{x}_1, \ow{x}_2 \in \pi^{-1}(x)$.  
Since the measure is independent of the sheet and the number of sheets is infinite, it follows that $\phi$ could not be in $\ell^2(\ow{X},\ow{m})$.
Therefore, the limit must be zero.
\end{proof}

\section{Covering manifolds and heat kernels}\label{sec:manifold}
In this section, we give a deterministic proof of the fact that stochastic incompleteness of a complete Riemannian manifold is equivalent to stochastic incompleteness
of the cover, thus answering a questions raised in \cite{PS12}.  The proof is essentially a synthesis of results found in \cite{Bor00, Li12} and 
the basic argument was already given for graphs in the preceding section though the technicalities are different in the manifold setting.

\bigskip

Let $(M,g)$ be a complete Riemannian manifold. Let $\widetilde{M}$ be a regular covering of $M,$ $\pi:\widetilde{M} \to M$ be the covering map and $\Gamma$ be the deck transformation group. We denote by $\wt{g}$ the lifted Riemannian metric on $\wt{M}$ via the map $\pi.$ 
One can then show that $(\wt{M},\wt{g})$ is complete. 

We denote by $\Delta$ ($\wt{\Delta}$, respectively) the (positive) Laplace-Beltrami operator on $M$ ($\wt{M}$, respectively).
We can then define the heat kernel as follows.  Note that this is essentially the same as in the graph case though the initial condition is given in a distributional sense.
\begin{definition}\label{def:heat kernel}
Let $M$ be a complete Riemannian manifold. We say that $H_t(x,y)$ is a \emph{heat kernel} on $M$ if $H$ is positive, symmetric in the $x$ and $y$ variables and satisfies the heat equation 
\begin{equation}\label{eq:heat}\left(\Delta_x + \pt \right) H_t(x,y)=0 \end{equation} 
for $y\in M$ with initial condition
$$\lim_{t\to 0^+}H_t(x,y)=\delta_y(x)$$ 
where the limit is weak convergence in the sense of measure and $\delta_y(\cdot)$ denotes the point mass delta function at $y.$
\end{definition} 

We let $p_t(x,y)$ ($\wt{p}_t(\wt{x},\wt{y})$, respectively) denote the minimal heat kernel on $M$ ($\wt{M}$, respectively).  These can be
constructed via an exhaustion sequence regardless of the completeness of the manifold, see \cite{Dod83}.
Stochastic incompleteness (SI) is then defined analogous to the case of graphs as
\[ \int_M p_t(x,y) dy < 1 \]
for some (all) $x \in M$ and some (all) $t>0$.

For any $x,y\in M,$ we define 
$$q_t(x,y):=\sum_{\wt{y}\in \pi^{-1}(y)}\wt{p}_t(\wt{x},\wt{y})$$
for $\wt{x}\in\pi^{-1}(x).$ 
By the local Harnack inequality, one can show that $q_t(x,y)$ is finite for any $x,y\in M,$ 
see, for example, the proof of Corollary~16.3 in \cite{Li12}. Since the covering is regular, it is easy to show that the definition 
of $q_t(x,y)$ is independent of choice of $\wt{x}$ in $\pi^{-1}(x)$ and  that $q_t(x,y)$ is symmetric in $x$ and $y.$

Bordoni \cite{Bor00} proved the following estimate, see Lemma~\ref{l:heat_equation_2} above for the proof of the essential step in the case
of graphs.
\begin{proposition}[Proposition~2.4 in \cite{Bor00}] \label{p:bord}
Let $\wt{M}$ be a regular covering of $M.$ Then, for all $t>0,x,y\in M,$
\begin{equation*} 
q_t(x,y)\leq p_t(x,y).
\end{equation*}
\end{proposition}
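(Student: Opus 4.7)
The plan is to mirror the graph proof from Lemma~\ref{l:heat_equation_2} and Theorem~\ref{t:equality}. First, I would exhaust $\wt M$ by a sequence $\wt D_i$ of relatively compact open sets with smooth boundary, and let $\wt p^{\,i}$ denote the Dirichlet heat kernel on $\wt D_i$. By standard theory $\wt p^{\,i}$ increases monotonically to $\wt p$ as $i \to \infty$. For each $y \in M$ and a chosen $\wt x \in \pi^{-1}(x)$, define
\[
q^i_t(y) := \sum_{\wt y \in \pi^{-1}(y) \cap \wt D_i} \wt p^{\,i}_t(\wt x, \wt y),
\]
a finite sum since $\pi^{-1}(y)$ is discrete and $\wt D_i$ is relatively compact. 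Monotone convergence gives $q^i_t(y) \nearrow q_t(x,y)$, and for $i$ large enough that $\wt x \in \wt D_i$ the initial datum is $\delta_x$ in the distributional sense.

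The essential step is to show that $q^i_t$ is a distributional subsolution of the heat equation on $M$, namely $(\Delta_y + \pt)q^i_t \leq 0$ weakly. For a non-negative test function $\varphi \in C^\infty_c(M)$, the co-area (fiber-integration) formula, valid because $\pi$ is a local isometry and a regular covering, gives
\[
\int_M q^i_t(y)\,\Delta\varphi(y)\,dy = \int_{\wt D_i} \wt p^{\,i}_t(\wt x, \wt z)\,\wt\Delta(\varphi\circ\pi)(\wt z)\,d\wt z.
\]
I then apply Green's identity on $\wt D_i$, invoking the Dirichlet condition $\wt p^{\,i}_t = 0$ on $\partial \wt D_i$ and the heat equation $\wt\Delta \wt p^{\,i}_t = -\pt \wt p^{\,i}_t$. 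The only surviving boundary contribution is $\int_{\partial \wt D_i}(\varphi \circ \pi)\,\partial_\nu \wt p^{\,i}_t\,dS$, which is $\leq 0$ since $\varphi \geq 0$ and, by the Hopf boundary point lemma, $\partial_\nu \wt p^{\,i}_t \leq 0$ on $\partial \wt D_i$. Undoing the co-area formula on the left and rearranging yields $\langle (\Delta + \pt)q^i_t, \varphi \rangle \leq 0$, which is precisely the desired subsolution property.

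With the subsolution property in hand, the parabolic minimum principle on $M$ then compares the non-negative function $q^i_t$, with initial datum $\delta_x$, to the minimal heat kernel $p_t(x,\cdot)$, yielding $q^i_t(y) \leq p_t(x,y)$. Letting $i \to \infty$ and applying monotone convergence concludes $q_t(x,y) \leq p_t(x,y)$.

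The principal technical obstacle is the legitimacy of the subsolution inequality in the weak sense, since $q^i_t$ is only guaranteed to be smooth off the measure-zero set $\pi(\partial \wt D_i)$: as $y$ crosses this set, preimages may enter or leave $\wt D_i$, so $q^i_t$ may be merely continuous there rather than smooth. The Green's identity computation above circumvents this by pushing all regularity questions up to the cover, where $\wt p^{\,i}_t$ is smooth and has well-understood Dirichlet boundary behavior. Once this is secured, the comparison step and the monotone limit are routine, so this hands the proof to the parabolic minimum principle exactly as in the discrete setting.
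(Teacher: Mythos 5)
Your proposal is correct and follows essentially the same route as the paper, which cites Bordoni's Proposition~2.4 and reproduces the essential step (the Dirichlet-exhaustion subsolution computation with the signed boundary term) for graphs in Lemma~\ref{l:heat_equation_2} and Theorem~\ref{t:equality}. Your weak formulation via Green's identity on $\wt D_i$, the non-positive outward normal derivative of the Dirichlet kernel, the comparison with the minimal heat kernel, and the monotone limit are exactly the intended argument transplanted to the manifold setting.
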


In fact, an argument by Li yields the following identity, see the proof of Corollary~16.3 in \cite{Li12} for manifolds and Theorem~\ref{t:equality} above for the graph case.
\begin{theorem}\label{thm:pli} Let $\wt{M}$ be a regular covering of $M.$ Then, for all $t>0,x,y\in M,$
$$q_t(x,y)= p_t(x,y).$$
\end{theorem}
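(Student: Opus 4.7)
The plan is to establish the reverse inequality $p_t(x,y) \leq q_t(x,y)$, since Proposition~\ref{p:bord} already supplies $q_t \leq p_t$. Following the strategy used in the graph setting (Theorem~\ref{t:equality}), the approach is to show that $q_t(x,y)$, viewed as a function of $(t,x)$ for fixed $y$, is itself a positive solution to the heat equation on $M$ with distributional initial condition $\delta_y$, and then appeal to the minimality of $p_t$ among such solutions. Recall that $p_t$ is constructed by exhausting $M$ by relatively compact open sets with Dirichlet boundary conditions and is the pointwise smallest positive solution matching the initial condition in the distributional sense, see \cite{Dod83}.

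The first main step is to show that $(\Delta_x + \pt) q_t(x,y) = 0$. Since $\pi$ is a local isometry, $\wt\Delta$ pushes down to $\Delta$, and each summand $\wt p_t(\wt x, \wt y)$ solves the heat equation in $\wt x$. Provided the differentiation operators $\Delta_x$ and $\pt$ commute with the sum over $\pi^{-1}(y)$, the heat equation for $q_t$ follows termwise. This interchange is legitimate provided we have domination of $\wt p_t(\wt x, \wt y)$ and of $|\pt \wt p_t(\wt x, \wt y)| = |\wt\Delta_{\wt x} \wt p_t(\wt x, \wt y)|$ by $\pi^{-1}(y)$-summable functions locally uniformly in $t$. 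Parabolic regularity on small parabolic cylinders in $\wt M$ bounds the time derivative of $\wt p_s$ by a multiple of its sup over the cylinder, and the finiteness of $q_t$, proved in \cite{Li12} via the local Harnack inequality, provides the required summability.

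The second main step is to verify the initial condition $\lim_{t\to 0^+} q_t(\cdot,y) = \delta_y$ weakly. For a test function $\varphi \in C_c^\infty(M)$, the local-isometry property of $\pi$ together with the $\Gamma$-invariance of $\wt p$ allows one to unfold
\[
\int_M q_t(x,y)\,\varphi(x)\,dx = \int_{\wt M} \wt p_t(\wt x, \wt y)\,(\varphi \circ \pi)(\wt x)\,d\wt x.
\]
The lift $\varphi \circ \pi$ is bounded but not compactly supported in $\wt M$. Splitting the right-hand integral into a small geodesic ball around $\wt y$, where the initial condition for $\wt p$ applies and the integral tends to $(\varphi \circ \pi)(\wt y) = \varphi(y)$, and its complement, where the contribution vanishes because $\wt p_t(\cdot,\wt y)$ concentrates at $\wt y$ as $t \to 0^+$, yields the desired convergence.

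Combining these two steps shows that $q_t(x,y)$ is a positive solution of the heat equation on $M$ with initial value $\delta_y$ in the distributional sense, so minimality of $p_t$ yields $p_t(x,y) \leq q_t(x,y)$. Together with Proposition~\ref{p:bord} this gives equality. The principal technical obstacle is the justification of the termwise differentiation, and especially the control of the escape of mass along the fiber as $t \to 0^+$, both of which rest on parabolic domination estimates on $\wt M$ that have no counterpart in the graph case of Section~\ref{s:coverings}.
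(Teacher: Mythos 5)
Your proposal is correct and follows essentially the same route as the paper: show that $q_t(\cdot,y)$ is itself a heat kernel on $M$ (termwise verification of the heat equation plus the weak initial condition $\delta_y$), invoke the minimality of $p_t$ to obtain $p_t \le q_t$, and combine with Proposition~\ref{p:bord}. The only minor difference is in checking the initial condition, where you unfold the integral to $\wt{M}$ and use concentration of $\wt{p}_t$ at $\wt{y}$, whereas the paper uses a cutoff on the base together with a lift to a fundamental domain; both rest on the same ingredients.
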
 

We will give an alternative proof of this result below.  In order to do so, we will show that $q_t(x,y)$ is also a heat kernel in the sense
of Definition~\ref{def:heat kernel}. This is sufficient to prove Theorem~\ref{thm:pli} by combining Bordoni's result with the minimality $p_t(x,y)$.
\begin{proposition}
Let $\wt{M}$ be a regular covering of $M.$ Then, $q_t(x,y)$ is a heat kernel on $M.$
\end{proposition}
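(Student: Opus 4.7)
The plan is to verify the three requirements of Definition~\ref{def:heat kernel} for $q_t(x,y)$: positivity and symmetry, the heat equation, and the distributional initial condition. Positivity and symmetry are immediate; the latter follows from symmetry of $\wt{p}_t$ combined with the regularity of the covering (the same deck-transformation argument that shows $q_t(x,y)$ is independent of the representative $\wt{x}\in\pi^{-1}(x)$). Finiteness of $q_t(x,y)$, together with local uniform bounds on compact subsets of $M$, has already been noted via the local Harnack inequality and Bordoni's estimate $q_t(x,y)\le p_t(x,y)$.

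To verify the heat equation $(\Delta_x+\partial_t)q_t(x,y)=0$, the idea is to differentiate termwise. Since $\pi$ is a local isometry, each summand $\wt{p}_t(\wt{x},\wt{y})$, viewed as a function of $x=\pi(\wt{x})$ in a neighborhood where $\pi$ is a diffeomorphism, satisfies the heat equation on $M$ pointwise. The task is therefore to justify interchanging the sum over $\pi^{-1}(y)$ with $\Delta_x$ and $\partial_t$. For this I would fix a compact set $K\subset M$ and a time window $[t_0,t_1]\subset(0,\infty)$, then apply parabolic regularity (interior Schauder estimates, or the analytic-hypoellipticity of the heat operator combined with local Harnack) to dominate the derivatives of each $\wt{p}_t(\wt{x},\wt{y})$ in space and time by a multiple of the sup of $\wt{p}_s(\cdot,\wt{y})$ on a slightly enlarged space–time cylinder lifted to $\wt{M}$. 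The local uniform convergence of the partial sums of $q_t(x,y)$ (since the total is bounded by $p_t(x,y)$) then propagates to uniform convergence of the sums of first and second derivatives, which is enough to differentiate termwise. This is the direct analogue of Lemma~\ref{l:heat_equation} in the graph case, with parabolic regularity playing the role of Lemma~\ref{l:uniform}.

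For the initial condition, I would test $q_t(\cdot,y)$ against a compactly supported smooth $\phi$ on $M$. Using positivity and Tonelli's theorem to lift the integral via $\pi$, for any fixed $\wt{y}\in\pi^{-1}(y)$ we obtain
\begin{equation*}
\int_M q_t(x,y)\,\phi(x)\,dx
=\int_{\wt{M}} \wt{p}_t(\wt{x},\wt{y})\,(\phi\circ\pi)(\wt{x})\,d\wt{x}.
\end{equation*}
The function $\phi\circ\pi$ is bounded and continuous on $\wt{M}$, and the right-hand side is $(e^{-t\wt{\Delta}}(\phi\circ\pi))(\wt{y})$ for the minimal heat semigroup on $\wt{M}$. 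As $t\to 0^+$, this converges to $(\phi\circ\pi)(\wt{y})=\phi(y)$ by the standard continuity at $t=0$ of the minimal heat semigroup applied to bounded continuous functions (using $\int_{\wt{M}}\wt{p}_t(\wt{x},\wt{y})\,d\wt{x}\le 1$ and concentration of mass near $\wt{y}$ on geodesic balls, which holds on the complete manifold $\wt{M}$). Hence $q_t(x,y)\to\delta_y(x)$ weakly, completing the verification.

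The main obstacle is the heat equation step: ensuring the series and its space/time derivatives converge uniformly on compact sets so that termwise differentiation is valid. This is what requires invoking parabolic interior estimates on $\wt{M}$ and translating the finiteness bound $q_t\le p_t$ into control of derivatives. Once this is in hand, the initial condition follows by a clean Fubini computation using the lifted integral and the semigroup property of $\wt{p}_t$.
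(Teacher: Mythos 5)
Your verification of the heat equation is essentially the paper's: the paper writes $q_t$ as a limit of partial sums $\sum_{i=1}^k\wt{p}_t(\wt{x_i},\wt{y})$ and invokes the argument of Theorem~12.4 in \cite{Li12}, which is precisely the interior parabolic regularity / local Harnack argument you describe for passing $\Delta_x$ and $\partial_t$ through a locally uniformly bounded, monotone series of positive solutions, with $q_t\le p_t$ supplying the local bound. Where you genuinely diverge is the initial condition. The paper localizes: it fixes a fundamental domain $\wt{M}_1$, chooses $\delta$ so that $\pi$ is an isometry from $B_\delta(\wt{y_1})\subset\wt{M}_1$ onto $B_\delta(y)$, splits $\varphi=\eta\varphi+(1-\eta)\varphi$ with a cutoff $\eta$ supported in $B_\delta(y)$, and then squeezes $\int_M q_t(x,y)\zeta(x)dx-\zeta(y)$ from above using Bordoni's inequality $q_t\le p_t$ together with the initial condition of $p_t$, and from below by discarding all sheets but one, lifting $\zeta$ to a compactly supported $\wt\zeta$ on $\wt{M}_1$, and using the initial condition of $\wt{p}_t$. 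You instead unfold the whole integral to $\wt{M}$ by Tonelli and identify it as the minimal semigroup applied to the bounded continuous, but in general \emph{not} compactly supported, function $\varphi\circ\pi$ evaluated at $\wt{y}$. This is correct and shorter, but it requires one input beyond the bare distributional definition of the heat kernel: pointwise continuity at $t=0^+$ of the minimal semigroup on bounded continuous functions, which you correctly reduce to concentration of mass, $\int_{B_\delta(\wt{y})}\wt{p}_t(\wt{x},\wt{y})d\wt{x}\to 1$, obtained by comparison with the Dirichlet heat kernel of a small ball together with $\int_{\wt M}\wt p_t\le 1$. The paper's two-sided squeeze avoids ever testing $\wt{p}_t$ or $p_t$ against anything non-compactly supported, at the cost of the fundamental-domain bookkeeping; your unfolding identity is the same co-area formula the paper deploys later in the proof of Theorem~\ref{thm:scequ}. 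Both routes are valid.
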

\begin{proof}
First, one can use the argument in Theorem~12.4 in \cite{Li12} to verify that $q_t(x,y)$ satisfies the heat equation \eqref{eq:heat} for any $t>0$ by writing 
$$q_t(x,y)=\lim_{k\to\infty}\sum_{i=1}^k\wt{p}_t(\wt{x_i},\wt{y})$$ 
where $\{\wt{x_i}\}_{i=1}^\infty=\pi^{-1}(x).$  

To complete the proof, we show that $q_t(x,y)$ satisfies the initial condition. That is, for any $\varphi\in C_c(M),$ 
where $C_c(M)$ denotes the compactly supported continuous functions on $M$, and any $y\in M$
\begin{equation}\label{eq:initial}
\lim_{t\to 0^+} \int_Mq_t(x,y)\varphi(x)dx = \varphi(y).
\end{equation} 

Without loss of generality, we may assume
that $\varphi \geq 0$. 
It is well-known that there exists a fundamental domain $\wt{M}_1$ in $\wt{M}$ such that $\gamma_1 \wt{M}_1\cap \gamma_2 \wt{M}_1=\emptyset$ if $\gamma_1\neq \gamma_2$ 
for $\gamma_1,\gamma_2\in \Gamma,$ and $\mathrm{vol}(\wt{M}\setminus\cup_{\gamma\in \Gamma}\gamma \wt{M}_1)=0.$
For fixed $y\in M,$ there exists a ball $B_\delta(y)$ of small radius $\delta>0$ such that $B_{2\delta}(\wt{y_1})$ is contained in a fundamental domain $\wt{M}_1$ for some $\wt{y_1}\in \pi^{-1}(y)$ and so that $\pi:B_\delta(\ow{y}_1) \to B_{\delta}(y)$ is an isometry.

Let $\eta$ be a smooth cut-off function on $M$ satisfying $\eta\equiv 1$ on $B_{\frac{\delta}{2}}(y)$, $0 \leq \eta \leq 1$ and $\mathrm{spt}\eta\subset B_\delta(y)$ 
where $\mathrm{spt}\eta$ denotes the support of $\eta.$  We may write
$$\varphi=\eta \varphi+(1-\eta)\varphi.$$ 
Note that since $q_t(x,y) \leq p_t(x,y)$,
\begin{align*}
\int_Mq_t(x,y)\big( (1-\eta)\varphi \big)(x) dx &\leq \int_Mp_t(x,y)\big( (1-\eta)\varphi \big)(x)dx \\
&\to \big( (1-\eta)\varphi \big)(y)=0, \qquad t \to 0^+
\end{align*} 
where we have used the initial condition for $p_t(x,y).$ 
Hence, to prove \eqref{eq:initial}, it suffices to show that 
\begin{equation}\label{eq:initial_2}
\lim_{t\to 0^+} \int_Mq_t(x,y)(\eta\varphi)(x)dx = \varphi(y).
\end{equation}

For simplicity, we write $\zeta=\eta \varphi.$ We note that $\mathrm{spt}\zeta\subset B_\delta(y)$ and  $\zeta(y)=\varphi(y).$ 
As we assume $\varphi\geq0$, by the fact that $q_t(x,y) \leq p_t(x,y)$, we get
\begin{align*}
\int_{M}q_t(x,y)\zeta(x)dx-\zeta(y)&\leq \int_M p_t(x,y)\zeta(x)dx-\zeta(y) \\
& \to 0, \qquad t \to 0^+
\end{align*}

On the other hand, to estimate the opposite difference, we use the fact that
$\mathrm{spt}\zeta\subset B_\delta(y)$ and $B_{\delta}(\wt{y_1})\subset \wt{M}_1,$ to lift the function $\zeta$ to $\wt{M}_1,$ by
$$\wt{\zeta}(\wt{x})=\left\{\begin{array}{ll}\zeta(\pi(\wt{x})),&\wt{x}\in \wt{M}_1,\\0,&\wt{x}\in \wt{M}\setminus \wt{M}_1.\end{array} \right.$$
Then $\wt{\zeta}\in C_c(\wt{M})$ with $\mathrm{spt}\wt{\zeta}\subset\wt{M_1}.$ Hence,
\begin{align*}
\zeta(y)-\int_{M}q_t(x,y)\zeta(x)dx &= \wt{\zeta}(\wt{y_1})-\int_{\mathrm{spt} \zeta}\sum_{\wt{x}\in \pi^{-1}(x)}\wt{p}_t(\wt{x},\wt{y_1})\zeta(x)dx\\
&\leq \wt{\zeta}(\wt{y_1})-\int_{\ow{M}_1}\wt{p}_t(\wt{x},\wt{y_1})\wt{\zeta}(\wt{x})d\wt{x}\\
& \to 0, \qquad t \to 0^+.
\end{align*} 

By combining these two inequalities, we get \eqref{eq:initial_2} which completes the proof.
 \end{proof}                                                           

We are now ready to give another proof of Theorem~\ref{thm:pli}, which was originally proven by Li using the Duhamel principle, 
see the proof of Corollary~16.3 in \cite{Li12}. 
\begin{proof}[Proof of Theorem~\ref{thm:pli}] Since $p_t(x,y)$ is the minimal heat kernel and $q_t(x,y)$ is a heat kernel by the proposition above, we get 
$$p_t(x,y)\leq q_t(x,y)$$ 
by Theorem~3.6 in \cite{Dod83}.
This proves the theorem by combining it with Bordoni's result, Proposition~\ref{p:bord}.
\end{proof}

In the case when $\wt{M}$ is a regular covering of $M$, Elworthy \cite{Elw82} used stochastic differential equations to give a proof of the fact that $M$ satisfies 
(SI) if and only if $\wt{M}$ satisfies (SI). 
Pigola and Setti \cite{PS12} asked for a deterministic proof of this fact. 
By Theorem~\ref{thm:pli}, we may provide an affirmative answer to their question.
\begin{theorem}\label{thm:scequ} Let $\wt{M}$ be a regular covering of $M.$ 
$M$ satisfies (SI) if and only if $\wt{M}$ satisfies (SI).
\end{theorem}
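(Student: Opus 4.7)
The plan is to reduce the statement to Theorem~\ref{thm:pli} in exactly the way that Theorem~\ref{t:covering}(i) was deduced from Theorem~\ref{t:equality} in the graph case. The key identity I want to establish is that, for every $x\in M$, every $\wt{x}\in\pi^{-1}(x)$, and every $t>0$,
\begin{equation*}
\int_M p_t(x,y)\,dy \;=\; \int_{\wt{M}} \wt{p}_t(\wt{x},\wt{y})\,d\wt{y}.
\end{equation*}
Once this is in hand, since (SI) is defined on both $M$ and $\wt{M}$ by the condition that the corresponding integral be strictly less than $1$, the equivalence of (SI) on $M$ and on $\wt{M}$ follows immediately.

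To prove the identity, I would proceed as follows. First, use Theorem~\ref{thm:pli} to rewrite
\begin{equation*}
\int_M p_t(x,y)\,dy \;=\; \int_M q_t(x,y)\,dy \;=\; \int_M \sum_{\wt{y}\in\pi^{-1}(y)} \wt{p}_t(\wt{x},\wt{y})\,dy.
\end{equation*}
Next, I would choose a fundamental domain $\wt{M}_1$ for the action of $\Gamma$ on $\wt{M}$ as in the proof of the initial condition for $q_t$ above, so that $\gamma_1 \wt{M}_1\cap \gamma_2 \wt{M}_1 = \emptyset$ for $\gamma_1\neq\gamma_2$ and $\mathrm{vol}\bigl(\wt{M}\setminus\bigcup_{\gamma\in\Gamma}\gamma\wt{M}_1\bigr)=0$. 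Because the covering is a local isometry, the restriction of $\pi$ to each translate $\gamma\wt{M}_1$ is measure-preserving onto $M$ (up to a set of measure zero), and the points of $\pi^{-1}(y)$ are in bijection with $\Gamma$ via their unique representative in each translate. Applying Tonelli's theorem, which is justified by positivity of $\wt{p}_t$, then yields
\begin{equation*}
\int_M \sum_{\gamma\in\Gamma} \wt{p}_t(\wt{x},\gamma\wt{y}_1)\,dy
\;=\; \sum_{\gamma\in\Gamma}\int_{\gamma\wt{M}_1} \wt{p}_t(\wt{x},\wt{z})\,d\wt{z}
\;=\; \int_{\wt{M}} \wt{p}_t(\wt{x},\wt{z})\,d\wt{z},
\end{equation*}
where $\wt{y}_1$ denotes the lift of $y$ in $\wt{M}_1$.

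The main obstacle is the rigorous handling of this unfolding step: one has to verify that the fundamental domain may be chosen so that $\pi|_{\gamma\wt{M}_1}$ is indeed measure-preserving onto $M$ modulo a null set, and that the countable interchange of sum and integral is licit. Both points are standard once one invokes the existence of a Borel fundamental domain together with Tonelli's theorem, so the argument is short. The only other subtlety is the case of a covering with finitely many sheets, which is immediate since the sum over $\Gamma$ is finite. Combining the displayed identity with the definition of (SI) gives the equivalence stated in the theorem.
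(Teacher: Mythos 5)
Your proposal is correct and follows essentially the same route as the paper: invoke Theorem~\ref{thm:pli} to replace $p_t$ by $q_t$, then unfold $\int_M \sum_{\wt{y}\in\pi^{-1}(y)}\wt{p}_t(\wt{x},\wt{y})\,dy$ into $\int_{\wt{M}}\wt{p}_t(\wt{x},\wt{y})\,d\wt{y}$ (the paper cites the co-area formula for $\pi$ where you spell this out via a fundamental domain and Tonelli). No further comment is needed.
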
  
\begin{proof} 
Given $x\in M,$ we choose any $\wt{x}\in \pi^{-1}(x).$ By Theorem ~\ref{thm:pli},
\begin{align*}
\int_M p_t(x,y)dy &= \int_M q_t(x,y)dy \\
&=\int_M\sum_{\wt{y}\in \pi^{-1}(y)}\wt{p}_t(\wt{x},\wt{y})d\wt{y} =\int_{\wt{M}}\wt{p}_{t}(\wt{x},\wt{y})d\wt{y}
\end{align*} 
where the last equality follows from the co-area formula. The theorem is a direct consequence of the above equality.
\end{proof}

\bibliographystyle{alpha}
\bibliography{coverings.bib}{}

\end{document}